\def\th@plain{\upshape
\itshape 
} \makeatother
\renewenvironment{proof}[1][\proofname]{\par
  \pushQED{\qed}%
  \normalfont \topsep6\p@\@plus6\p@\relax
  \trivlist
  \item[\hskip\labelsep
        \bfseries
    #1\@addpunct{.}]\ignorespaces
}{%
  \popQED\endtrivlist\@endpefalse
} \makeatother
\newtheorem{thm}{Theorem}[section]
\newtheorem{cor}[thm]{Corollary}
\newtheorem{lem}[thm]{Lemma}
\newtheorem{conj}[thm]{Conjecture}
\newtheorem{prop}[thm]{Proposition}
\numberwithin{equation}{section}
\def\pn{\par\noindent}
\def\qed{\hfill \rule{2.5mm}{2.5mm}}
\newcommand{\ie}{i.e.,\ }
\newcommand{\set}[1]{\left\{#1\right\}}
\newcommand{\D}{\displaystyle}
\newcommand{\DF}[2]{\frac{\D#1}{\D#2}}
\newcommand{\lf}{\left\lfloor}
\newcommand{\rf}{\right\rfloor}
\newcommand{\lk}{\left(}
\newcommand{\rk}{\right)}
\newcommand{\M}{\lf\DF{M+2}{4}\rf}
\newcommand{\F}{\cup_{x\in N(u)}\Phi(x)}
\newcommand{\E}{\cup_{e\ni u}I_\Phi(e)}
\begin{document}

\title{(2,1)-Total labeling of planar graphs with large maximum
degree \thanks{This research is partially supported by
IIFSDU(2009hw001), NNSF(61070230, 11026184, 10901097) and
RFDP(200804220001, 20100131120017) and SRF for ROCS.}}
\author{Yong Yu \thanks{Corresponding author.\quad yuyong6834@yahoo.com.cn}, Xin Zhang, Guanghui Wang, Jinbo Li \\[.5em]
{\small School of Mathematics, Shandong University, Jinan 250100, P. R. China}\\
}
\date{}

\maketitle

\begin{abstract}\baselineskip 0.60cm
The ($d$,1)-total labelling of graphs was introduced by Havet and
Yu. In this paper, we prove that, for planar graph $G$ with maximum
degree $\Delta\geq12$ and $d=2$, the (2,1)-total labelling number
$\lambda_2^T(G)$ is at most $\Delta+2$.
\\[.5em]
\textbf{Keywords}: ($d$,1)-total labelling; (2,1)-total
labelling; planar graphs.\\[.5em]
\textbf{MSC}: 05C10, 05C15.
\end{abstract}

\baselineskip 0.60cm

\section{Introduction}
In this paper, all graphs considered are finite, simple and
undirected. We use $V(G)$, $E(G)$, $\delta(G)$ and $\Delta (G)$ (or
simply $V$, $E$, $\delta$ and $\Delta$) to denote the vertex set,
the edge set, the minimum degree and the maximum degree of a graph
$G$, respectively. Let $G$ be a plane graph. We always denote the
face set of $G$ by $F(G)$. The degree of a face $f$, denoted by
$d(f)$, is the number of edges incident with it, where cut edge is
counted twice. A $k$-, $k^+$- and $k^-$-vertex (or face) in graph
$G$ is a vertex (or face) of degree $k$, at least $k$ and at most
$k$, respectively. Furthermore, if a vertex $v$ is adjacent to a
$k$-vertex $u$, we say that $u$ is a $k$-neighbor of $v$. For $f\in
F(G)$, we call $f$ a $[d(v_1),d(v_2),\cdots,d(v_k)]$-face if
$v_1,v_2,\cdots,v_k$ are the boundary vertices of $f$ in clockwise
order. A 3-face is also usually called a triangle face. Readers are
referred to \cite{Bondy} for other undefined terms and notations.

The ($d$,1)-total labelling of graphs was introduced by Havet and Yu
\cite{Havet}. A \emph{$k$-($d$,1)-total labelling} of a graph $G$ is
a function $c$ from $V(G)\cup E(G)$ to the color set
$\{0,1,\cdots,k\}$ such that $c(u)\neq c(v)$ if $uv\in E(G)$,
$c(e)\neq c(e')$ if $e$ and $e'$ are two adjacent edges, and
$|c(u)-c(e)|\geq d$ if vertex $u$ is incident to the edge $e$. The
minimum $k$ such that $G$ has a $k$-($d$,1)-total labelling is
called the \emph{($d$,1)-total labelling number} and denoted by
$\lambda_d^T(G)$. Readers are referred to \cite{BMR,Chen,Lih,MR,WC}
for further research. When $d=1$, the (1,1)-total labelling is the
well-known total coloring of graphs. Havet and Yu gave a conjecture
similar to Total Coloring Conjecture, which is called ($d$,1)-Total
Labelling Conjecture.

\begin{conj}[\cite{Havet}]\label{conj:DTLC}
Let $G$ be a graph. Then $\lambda_d^T(G)\leq\min\{\Delta+2d-1,\
2\Delta+d-1\}$.
\end{conj}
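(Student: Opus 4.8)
The statement bundles two quite different bounds under one minimum, so my plan is to treat them separately and then take the smaller. Note first that $\Delta + 2d - 1 \le 2\Delta + d - 1$ exactly when $d \le \Delta$, so for $d \le \Delta$ the target is $\Delta + 2d - 1$ and for $d > \Delta$ it is $2\Delta + d - 1$. I expect the second bound to be genuinely provable for every graph, whereas the first is where essentially all the difficulty lives: setting $d = 1$ collapses the first bound to $\lambda_1^T(G) \le \Delta + 1$, which, since $\lambda_1^T(G) = \chi''(G) - 1$, is equivalent to the Total Coloring Conjecture $\chi''(G) \le \Delta + 2$. Recognizing this at the outset dictates the whole strategy: a complete unconditional proof of the conjecture is almost certainly out of reach, so I would aim at three things — an unconditional proof of $\lambda_d^T(G) \le 2\Delta + d - 1$ for all $G$, a conditional reduction of $\Delta + 2d - 1$ to the Total Coloring Conjecture, and unconditional confirmation of $\Delta + 2d - 1$ on structured classes (such as the planar, large-$\Delta$ case treated in this paper).

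For the bound $2\Delta + d - 1$ I would argue constructively. Apply Vizing's theorem to properly color $E(G)$ with at most $\Delta + 1$ labels, and Brooks' theorem to properly color $V(G)$ with at most $\Delta$ labels. Then place the two palettes on disjoint, well-separated stretches of $\{0, 1, \ldots, 2\Delta + d - 1\}$: give vertices labels from $\{0, \ldots, \Delta - 1\}$ ($\Delta$ labels) and edges labels from $\{\Delta - 1 + d, \ldots, 2\Delta - 1 + d\}$ ($\Delta + 1$ labels). Every vertex label and every edge label then differ by at least $d$, so the incidence constraint holds automatically, while the two proper colorings supply the adjacency constraints; the largest label used is $2\Delta + d - 1$. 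The only gap is Brooks' exceptional cases — complete graphs and odd cycles — which I would dispatch directly (for odd cycles $\Delta = 2$, and for $K_{n}$ one uses the known explicit $(d,1)$-labellings of complete graphs).

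For the bound $\Delta + 2d - 1$ the plan is to reduce to the $d = 1$ case. Concretely, I would try to establish a dilation inequality of the form $\lambda_d^T(G) \le \lambda_1^T(G) + 2(d - 1)$, by taking an optimal $(1,1)$-total labelling (equivalently a proper total coloring), whose vertex-edge incidences already have gap at least $1$, and stretching its palette so that each such gap is widened to at least $d$ at a cost of $2(d-1)$ extra labels. Combined with $\lambda_1^T(G) = \chi''(G) - 1$ and the Total Coloring Conjecture, this would give $\lambda_d^T(G) \le (\Delta + 1) + 2(d-1) = \Delta + 2d - 1$. This is precisely the main obstacle: the $d = 1$ instance \emph{is} the Total Coloring Conjecture, so the first bound cannot be proved in full generality without resolving TCC, and the dilation step itself must be checked so as not to overshoot the minimum. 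I would therefore settle, unconditionally, for classes where $\chi'' \le \Delta + 2$ is known, and — the point of this paper — attack $\Delta + 2d - 1$ (indeed the sharper $\Delta + 2$ when $d = 2$) directly for planar graphs by discharging: take a minimal counterexample, show that various reducible configurations cannot occur, assign charges $d(x) - 4$ to vertices and faces so that Euler's formula yields total charge $-8$, and redistribute to a contradiction. The technical crux here is the reducibility analysis, since the gap-$d$ incidence constraint forbids an interval of $2d-1$ labels rather than a single label, so the label-counting familiar from total coloring must be redone with interval arithmetic; that step I expect to be the most delicate.
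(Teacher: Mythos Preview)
The statement you were asked to prove is Conjecture~\ref{conj:DTLC}, the $(d,1)$-Total Labelling Conjecture of Havet and Yu, and the paper does \emph{not} prove it: it is quoted as an open conjecture, and the paper's contribution is the special case $d=2$ for planar graphs with $\Delta\ge 12$ (Theorem~\ref{Thm:new2TL}). So there is no ``paper's own proof'' to compare your proposal against. You evidently recognised this, since your proposal is largely a meta-analysis of why a full proof is unattainable --- correctly observing that the $d=1$ instance of the bound $\Delta+2d-1$ is exactly the Total Coloring Conjecture --- together with a sketch of the bound $2\Delta+d-1$ and an outline of the discharging approach for the planar case. That last outline does match in spirit what the paper actually carries out.

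Two technical remarks on the partial arguments you offered. First, your Brooks--Vizing construction for $2\Delta+d-1$ is the right shape, but the exceptional cases are not quite as disposable as you suggest: for $K_{\Delta+1}$ you genuinely need $\Delta+1$ vertex labels, so the palette $\{0,\ldots,\Delta-1\}$ is too small, and the fix requires a different placement (or a direct labelling of $K_n$), not just a footnote. Second, the dilation inequality $\lambda_d^T(G)\le \lambda_1^T(G)+2(d-1)$ that you propose as a reduction to TCC is not obviously true: a proper total coloring guarantees only that incident vertex--edge labels are distinct, and there is no uniform stretching of the palette that turns every such gap of $\ge 1$ into a gap of $\ge d$ at cost exactly $2(d-1)$. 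That step would itself be a substantial result, and as stated it is a gap in your conditional argument rather than a routine lemma.
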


\pn When $d=2$, ($d$,1)-Total Labelling Conjecture can be rewritten
as follows.

\vspace{0.3cm}\noindent{\bf Conjecture \ref{conj:DTLC}$^{\prime}$.}
Let $G$ be a graph. Then $\lambda_2^T(G)\leq\Delta+3$.

\vspace{0.3cm} In \cite{Chen}, Chen and Wang studied the (2,1)-total
labelling number of outerplanar graphs. In \cite{BMR}, Bazzaro,
Montassier and Raspaud proved a theorem for planar graph with large
girth and high maximum degree:

\begin{thm}[\cite{BMR}]\label{Thm:BMR}
Let $G$ be a planar graph with maximum degree $\Delta$ and girth $g$. Then $\lambda_d^T(G)\leq\Delta+2d-2$ with $d\geq2$ in the following cases:\\
(1)\ $\Delta\geq2d+1$ and $g\geq11$; (2)\ $\Delta\geq2d+2$ and
$g\geq6$; (3)\ $\Delta\geq2d+3$ and $g\geq5$; (4)\ $\Delta\geq8d+2$.
\end{thm}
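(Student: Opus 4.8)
\section*{Proof proposal}

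The plan is to argue by contradiction via a minimal counterexample combined with the discharging method, writing $k := \Delta + 2d - 2$ so that the available colors are $0,1,\dots,k$, i.e.\ $\Delta + 2d - 1$ in number. Fix one of the four cases and suppose $G$ is a counterexample with $|V(G)| + |E(G)|$ minimum: thus $G$ is a connected planar graph meeting the girth/degree hypothesis of that case, with $\lambda_d^T(G) > k$, while every proper subgraph admits a $k$-$(d,1)$-total labelling. First I would record that $G$ has minimum degree at least $2$: a $1$-vertex $v$ with neighbor $u$ is reducible, since after deleting $v$ one colors the edge $uv$ avoiding the $\le \deg(u)-1$ edges at $u$ and the band of $2d-1$ colors within distance $d$ of $c(u)$, then colors $v$ avoiding $c(u)$ and the band around $c(uv)$; the counts $\Delta+2d-2$ and $2d$ are both strictly below $\Delta+2d-1$ whenever $\Delta\ge 2$.

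Next I would build the list of forbidden (reducible) configurations, all proved by the same template: delete a carefully chosen vertex or edge, invoke minimality to label the remainder, and then extend by counting the colors forbidden for each uncolored element (an edge $uv$ forbids the $\le \deg(u)+\deg(v)-2$ colors of its neighboring edges together with two length-$(2d-1)$ bands, while a vertex forbids its neighbors' colors plus the bands around its incident edges). The crucial configurations concern $2$-vertices and their environment: I expect to show that a $2$-vertex cannot be adjacent to a vertex of small degree, that two $2$-vertices cannot be adjacent (and, more generally, that long ``threads'' of $2$-vertices are excluded), and that a $\Delta$-vertex carries only a bounded number of $2$-neighbors. The precise degree thresholds in these lemmas are where the three girth hypotheses $\Delta\ge 2d+1,\,2d+2,\,2d+3$ enter, since the slack in the color count for the last uncolored element is roughly $\Delta-2d-1$ plus a correction that shrinks as the neighboring degrees grow; larger girth then compensates for a smaller permitted $\Delta$ by making the local structure sparser.

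For the discharging I would assign to each $x\in V(G)\cup F(G)$ the charge $\mathrm{ch}(v)=(g-2)\deg(v)-2g$ on vertices and $\mathrm{ch}(f)=2\deg(f)-2g$ on faces; Euler's formula together with $\sum_{f}\deg(f)=2|E|$ gives total charge $-4g$, while the girth bound $\deg(f)\ge g$ makes every face charge nonnegative. Hence all deficiency sits on low-degree vertices: a direct check shows that for $g\ge 6$ only $2$-vertices are negative (charge $-4$), while for $g=5$ the $3$-vertices are also deficient (charge $-1$). The rules would move charge from high-degree vertices (and, where available, from faces of degree exceeding $g$) to their negative neighbors; using the reducibility lemmas --- a $2$-vertex has two large neighbors and each large vertex has few $2$-neighbors --- one verifies that every element finishes with nonnegative charge, contradicting the total $-4g$.

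The main obstacle is case (4), where no girth is assumed and triangles are permitted, so the clean separation above breaks down: with $g=3$ the charges become $\mathrm{ch}(v)=\deg(v)-6$ and $\mathrm{ch}(f)=2\deg(f)-6$ (total $-12$), and every vertex of degree at most $5$ is deficient, so far more charge must be routed. Here the hypothesis $\Delta\ge 8d+2$ is essential: it supplies enough slack in the extension arguments to forbid a much richer family of configurations (sharp bounds on the number and arrangement of small-degree neighbors of each vertex, and on the local triangle structure), which is the delicate combinatorial heart of the proof. I would treat this case with its own discharging rules tuned to the planar sparsity bound $|E|\le 3|V|-6$, and I expect the bulk of the technical work --- and the step most likely to require the full strength of $\Delta\ge 8d+2$ --- to be the simultaneous discharge of $2$-, $3$-, $4$- and $5$-vertices in the presence of triangles.
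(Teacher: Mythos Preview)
The paper does not prove this theorem at all: Theorem~\ref{Thm:BMR} is quoted from \cite{BMR} purely as background, and the present paper's contribution (Theorems~\ref{Thm:2TL} and~\ref{Thm:new2TL}) is an improvement of its case~(4) for $d=2$. So there is no proof here to compare your proposal against.

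That said, your outline is a reasonable sketch of the standard approach to results of this type, and indeed the paper's proof of its own main theorem follows the same template you describe (minimal counterexample, reducible configurations, discharging with charge $d(x)-4$). If your intent was to reconstruct the argument of \cite{BMR}, your plan is plausible in broad strokes, but be aware that several of your details are speculative: the specific charge functions $(g-2)\deg(v)-2g$ and $2\deg(f)-2g$ you propose for the girth cases, the precise list of reducible configurations, and especially the handling of case~(4) would all need to be checked against the original source rather than this paper.
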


\pn Additionally, the following risky conjecture was also proposed
in \cite{BMR}.

\begin{conj}[\cite{BMR}]\label{conj:BMR}
For any planar triangle-free graph $G$ with $\Delta\geq3$,
$\lambda_d^T(G)\leq\Delta+d$.
\end{conj}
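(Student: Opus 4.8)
The plan is to attack the conjecture by the \emph{discharging method} applied to a minimal counterexample, which is the standard template for the whole family of results quoted above (including Theorem~\ref{Thm:BMR}). Suppose the statement fails and let $G$ be a counterexample minimizing $|V(G)|+|E(G)|$: thus $G$ is a triangle-free planar graph with $\Delta(G)=\Delta\geq3$ and $\lambda_d^T(G)>\Delta+d$, while every triangle-free planar graph strictly smaller than $G$ admits a $(\Delta+d)$-$(d,1)$-total labelling. Fix the palette $\{0,1,\dots,\Delta+d\}$ of $\Delta+d+1$ colors. The first half of the argument is local: establish a list of \emph{reducible configurations}, that is, small subgraphs that cannot occur in $G$ because their presence would let one delete an edge or a low-degree vertex, invoke minimality to label the remainder, and then extend the labelling, contradicting $\lambda_d^T(G)>\Delta+d$.

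For the reductions the key bookkeeping is that an edge $e=uv$ is forbidden from using the colors of the (at most) $d(u)+d(v)-2$ edges sharing an endpoint with it, together with the $2d-1$ colors within distance $d-1$ of $c(u)$ and the $2d-1$ within distance $d-1$ of $c(v)$; similarly a vertex $v$ is forbidden from the colors of its neighbors and from the $2d-1$ colors near each incident edge. I would first try to show $G$ has no $1$-vertex, then that $2$-vertices cannot be adjacent to low-degree vertices, rule out paths and short cycles consisting of $2$-vertices, and finally forbid ``light'' vertices (degree below a suitable threshold) whose neighbors are all light. Since $G$ is triangle-free, these configurations can be recolored with more slack than in the general planar case, which is what should make the improved target $\Delta+d$ (rather than $\Delta+2d-1$) plausible.

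The second half is global discharging. By Euler's formula, assigning charge $\mu(x)=d(x)-4$ to every vertex and every face yields $\sum_x\mu(x)=-8$, and triangle-freeness forces $d(f)\geq4$, so every face has nonnegative charge and all negative charge sits on the vertices of degree at most $3$ (a degree-$k$ vertex carries $k-4<0$ for $k\le3$). I would then design discharging rules moving charge from $4^{+}$-vertices and large faces to the $2$- and $3$-vertices, using the reducible configurations to guarantee that each needy vertex has enough rich neighbors or incident large faces to be repaid up to nonnegativity. If every element ends nonnegative, the total becomes $\geq0$, contradicting $-8$, and the conjecture follows.

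The hard part, and the reason the authors themselves call this a \emph{risky} conjecture, is that the target $\Delta+d$ is extremely tight, so the reductions barely close, and for some ranges of the parameters they do not close at all. Each incidence constraint $|c(u)-c(e)|\geq d$ removes up to $2d-1$ colors, and with only $\Delta+d+1$ colors available the slack for even the simplest edge extension (a pendant edge) is about $(\Delta+d+1)-(\Delta+2d-2)=3-d$, which is positive only when $d\le2$, vanishes at $d=3$, and is negative once $d\ge4$, where moreover the allowed value $\Delta\geq3$ may be much smaller than $d$. Consequently I expect that no fixed finite family of reducible configurations together with a single discharging scheme can succeed uniformly in $d$; the genuine obstacle is to find reductions and recoloring moves that survive this near-zero (or negative) slack, which is precisely why the statement remains an open conjecture rather than a routine instance of the discharging paradigm.
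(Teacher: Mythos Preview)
The statement you were asked to ``prove'' is explicitly labelled a \emph{conjecture} in the paper (Conjecture~\ref{conj:BMR}, quoted from \cite{BMR}); the paper provides no proof of it, nor does it claim to. It cites the conjecture only as motivation, noting that its main theorem ``can be seen as a support'' for it in the case $d=2$. So there is no paper proof against which to compare your attempt.

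To your credit, you recognise this yourself in the final paragraph: your write-up is not a proof but a sketch of the standard discharging template, followed by an honest analysis of why the slack collapses (and goes negative for $d\ge4$), which is exactly why the statement remains open. That diagnosis is sound. The only thing to flag is that the task itself was ill-posed---there is nothing to prove here, and your proposal correctly does not pretend otherwise.
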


Let $\chi$ and $\chi'$ denote the chromatic number and the edge
chromatic number, respectively. The following results was first
mentioned in \cite{Havet}.

\begin{prop}[\cite{Havet}]\label{prop:Havet}
Let $G$ be a graph with degree $\Delta$. Then
\begin{enumerate}
  \vspace{-2mm}\item[(1)] $\lambda_d^T(G)\leq\chi+\chi'+d-2$;
  \vspace{-2mm}\item[(2)] $\lambda_d^T(G)\geq\Delta+d-1$;
  \vspace{-2mm}\item[(3)] $\lambda_d^T(G)\geq\Delta+d$ if $d\geq\Delta$ or $G$ is $\Delta$-regular.
\end{enumerate}
\end{prop}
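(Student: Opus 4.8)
The three parts are essentially independent; my plan is to treat them in turn, each by a short counting argument localized at a single vertex. For part (1), the idea is to superimpose an optimal proper vertex colouring and an optimal proper edge colouring, after shifting the edge palette upward so that it clears the vertex palette by the required gap $d$. Concretely, I would fix a proper vertex colouring $\phi\colon V(G)\to\{0,1,\dots,\chi-1\}$ and a proper edge colouring $\psi\colon E(G)\to\{0,1,\dots,\chi'-1\}$, and set $c(v)=\phi(v)$ for $v\in V(G)$ and $c(e)=\psi(e)+\chi+d-1$ for $e\in E(G)$. Adjacent vertices and adjacent edges then receive distinct labels because $\phi$ and $\psi$ are proper, while an incident vertex--edge pair $u,e$ satisfies $c(e)-c(u)\ge(\chi+d-1)-(\chi-1)=d$. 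Since the largest label used is $(\chi'-1)+(\chi+d-1)=\chi+\chi'+d-2$, the map $c$ is a $(\chi+\chi'+d-2)$-$(d,1)$-total labelling, which yields the bound. This part is routine.

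For part (2), I would look at a vertex $v$ with $d(v)=\Delta$ in an arbitrary $k$-$(d,1)$-total labelling $c$, and put $a=c(v)$. The $\Delta$ edges at $v$ carry pairwise distinct labels from $\{0,\dots,k\}$, none of them in the forbidden window $W=\{a-d+1,\dots,a+d-1\}$. The elementary point is that $W\cap\{0,\dots,k\}$ contains at least $d$ labels: of the two length-$d$ blocks $\{a,a+1,\dots,a+d-1\}$ and $\{a-d+1,\dots,a-1,a\}$, at least one lies entirely in $\{0,\dots,k\}$ (otherwise every label would be forbidden for the edges at $v$, which is absurd). Hence at most $k+1-d$ labels remain for the $\Delta$ edges, so $\Delta\le k+1-d$, that is $k\ge\Delta+d-1$; as $c$ was arbitrary, $\lambda_d^T(G)\ge\Delta+d-1$.

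For part (3), I would argue by contradiction: suppose a $k$-$(d,1)$-total labelling $c$ with $k=\Delta+d-1$ exists, and assume $d\ge2$ (the case $d=1$ being the classical total-colouring lower bound for $\Delta$-regular graphs). Tightening part (2): at a $\Delta$-vertex $v$ the count must be tight, so $|W\cap\{0,\dots,k\}|=d$ exactly, and a short case check (using $d\ge2$) shows that this forces $a=c(v)\in\{0,k\}$; by the symmetry $x\mapsto k-x$ I may assume $c(v)=0$, whereupon the $\Delta$ edges at $v$ use precisely the labels $\{d,d+1,\dots,k\}$. If $G$ is $\Delta$-regular, every vertex is a $\Delta$-vertex, hence labelled $0$ or $k$, so $G$ is bipartite with colour classes labelled $0$ and $k$; at any vertex $w$ labelled $k$ the incident edges use exactly $\{0,1,\dots,\Delta-1\}$, so some edge $wz$ gets label $0$, which forces $c(z)\ge d>0$ and hence $c(z)=k$, contradicting $wz\in E(G)$. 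If instead $d\ge\Delta$, I would pick the edge $vv'$ at the $0$-vertex $v$ that is labelled $d$; then $|c(v')-d|\ge d$ together with $c(v')\ne 0$ (since $v'\sim v$) forces $c(v')\ge 2d$, which is impossible since $c(v')\le k=\Delta+d-1\le 2d-1$. In both cases $\lambda_d^T(G)\ge\Delta+d$.

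I expect the only genuinely delicate step to be the bookkeeping in part (3): one must be sure that the ``forced'' configuration---every relevant vertex labelled $0$ or $k$, and each relevant edge-palette being a full interval of labels---really is compelled by the tight equality $k=\Delta+d-1$, and it is precisely here that each of the two hypotheses ``$d\ge\Delta$'' and ``$G$ is $\Delta$-regular'' enters in an essential way. Everything else reduces to counting the labels available around a single vertex, so I anticipate no further obstacle.
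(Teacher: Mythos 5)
This proposition is stated in the paper as a quoted result from Havet and Yu \cite{Havet} and is used as a black box; the paper contains no proof of it, so there is nothing in-paper to compare your argument against. Judged on its own, your proof is correct in all the cases that matter. Part (1) is the standard palette-shift construction and is fine. Part (2) is correct: your observation that at least one of the two length-$d$ blocks around $a=c(v)$ lies entirely inside $\{0,\dots,k\}$ (otherwise every label would be forbidden for the edges at $v$) gives $|W\cap\{0,\dots,k\}|\ge d$ and hence $k\ge\Delta+d-1$. Part (3) for $d\ge 2$ also holds up: tightness forces exactly $d-1$ elements of $W$ to fall outside $\{0,\dots,k\}$, and a two-sided overflow would put only $2d-2-k=d-1-\Delta<d-1$ elements outside when $\Delta\ge1$, so the overflow is one-sided and $c(v)\in\{0,k\}$; both the $\Delta$-regular case and the $d\ge\Delta$ case then close exactly as you describe. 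These are essentially the arguments in the cited source.

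The one genuine misstep is your disposal of $d=1$ in part (3). There is no ``classical total-colouring lower bound'' of $\Delta+2$ for $\Delta$-regular graphs: odd complete graphs are Type~1 (for instance $\chi''(K_3)=3=\Delta+1$, so $\lambda_1^T(K_3)=2=\Delta$), and in fact part (3) as literally stated fails for $d=1$ and $G=K_3$. The statement should be read with $d\ge2$, which is precisely the hypothesis your own argument needs to force $c(v)\in\{0,k\}$, and is all this paper ever uses since it only invokes the proposition for $d=2$. You should state that restriction explicitly rather than appeal to a nonexistent classical result.
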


By (1) of Proposition \ref{prop:Havet}, for planar graph with large
maximum degree, since $\chi\leq4$ and $\chi'=\Delta$ \cite{SZH},\,
($d$,1)-Total Labelling Conjecture is meaningful only for $d$ with
$\Delta+2d-1\leq\Delta+d+2$, \ie $1\leq d\leq3$. That is why we only
consider (2,1)-total labellings for planar graph in this paper.

Our main result, shown as in Theorem \ref{Thm:2TL}, is an
improvement of Theorem \ref{Thm:BMR} when $d=2$. On the other hand,
it is also can be seen as a support for Conjecture \ref{conj:BMR}
and ($d$,1)-Total Labelling Conjecture when $d=2$. Furthermore, the
upper bound $\Delta+2$ is best possible because planar graph with
arbitrary maximum degree and $\lambda_2^T(G)=\Delta+2$ was given in
\cite{BMR}.

\begin{thm}\label{Thm:2TL}
Let $G$ be a planar graph with maximum degree $\Delta\geq12$. Then
$\Delta+1\leq\lambda_2^T(G)\leq\Delta+2$.
\end{thm}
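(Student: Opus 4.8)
The plan is to treat the two inequalities separately. The lower bound $\lambda_2^T(G)\ge\Delta+1$ is immediate from part~(2) of Proposition \ref{prop:Havet} with $d=2$, so all the work is in the upper bound $\lambda_2^T(G)\le\Delta+2$. I would prove this by contradiction: let $G$ be a counterexample minimizing $|V(G)|+|E(G)|$. Then $G$ is connected with $\delta(G)\ge1$, and every graph $H\subseteq G$ with fewer vertices-plus-edges admits a $(\Delta+2)$-$(2,1)$-total labelling using the palette $C=\{0,1,\dots,\Delta+2\}$, where $|C|=\Delta+3$. The basic bookkeeping underlying every reduction is the following: once the label of a vertex is fixed, an incident edge must avoid the (at most three) colors in $\{c(v)-1,c(v),c(v)+1\}$; hence an edge $uv$ has at least $\Delta+3-(d(u)+d(v)-2)-6=\Delta-d(u)-d(v)-1$ free colors, and a degree-$d$ vertex all of whose incident edges are already colored has at least $\Delta+3-4d$ free colors. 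It is precisely in making these counts strictly positive in the relevant cases that the hypothesis $\Delta\ge12$ is consumed.

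The next step is to establish a list of \emph{reducible configurations} that cannot occur in $G$. By analogy with the girth-free planar total-coloring literature and with \cite{BMR,Chen}, I would expect to need: $G$ has no $1$-vertex; a $2$-vertex is adjacent only to vertices of degree close to $\Delta$, and no edge joins a $j$-vertex to a $k$-vertex with $j+k$ small; a small-degree vertex has at most one or two small neighbours; a $3$-face incident with a $2$-vertex forces its other two vertices to have large degree, and more generally a triangle cannot carry too many small vertices; a small-degree vertex is incident with only a bounded number of triangles; and certain ``fan'' configurations around a small vertex whose incident faces are all triangles are forbidden. Each such lemma is proved in the usual way: delete the offending vertex or an edge of the configuration, colour the remainder by minimality, and then re-insert the deleted elements one at a time using the counting above. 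For the $(2,1)$-constraint one repeatedly needs the refinement of first \emph{erasing} the label of a low-degree vertex near the deleted piece, colouring the reinstated edges, and only then re-choosing that vertex's label from its at-least-one remaining admissible colour.

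The final step is the discharging argument. Assign to each $x\in V(G)\cup F(G)$ the initial charge $\mu(x)=d(x)-4$; since $G$ is planar, Euler's formula together with $\sum_v d(v)=\sum_f d(f)=2|E(G)|$ gives $\sum_x\mu(x)=-8<0$, and the only elements with negative charge are $2$-vertices ($-2$), $3$-vertices ($-1$) and $3$-faces ($-1$). I would then formulate discharging rules that send fixed amounts of charge from $\Delta$- and other large vertices, and from $4^{+}$-faces, to the $2$- and $3$-vertices and the $3$-faces in their vicinity (e.g.\ a large vertex sends a prescribed amount across each incident edge leading to a small neighbour, and a large face sends to its incident small vertices and triangles). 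Using the reducible configurations to bound how many small neighbours/incident triangles a large vertex or face must subsidize, I would verify $\mu^{*}(x)\ge0$ for every vertex and every face after discharging, contradicting $\sum_x\mu^{*}(x)=\sum_x\mu(x)=-8<0$. This contradiction proves $\lambda_2^T(G)\le\Delta+2$; sharpness of the bound is already noted in the excerpt via the examples of \cite{BMR}.

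The main obstacle I anticipate is the reducibility analysis rather than the discharging. The requirement $|c(u)-c(e)|\ge2$ between an incident vertex and edge makes naive greedy extensions fail by one colour surprisingly often, forcing the ``recolour-the-vertex-last'' device together with a case split according to whether the relevant vertex labels lie near the boundary $\{0,1,\Delta+1,\Delta+2\}$ of the palette, where only two rather than three colours are blocked. Producing a family of forbidden configurations strong enough that the discharging actually closes from an initial deficit of only $-8$, while keeping each reducibility proof correct under these boundary effects, is where the real difficulty lies.
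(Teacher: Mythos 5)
Your overall strategy --- minimal counterexample, a list of reducible configurations, and discharging from the initial charge $d(x)-4$ with total deficit $-8$ --- is the same as the paper's, and your lower bound and your colour-counting estimates are correct. But there are two genuine gaps. The first is in the very setup of the induction: if $G$ is a minimal counterexample to the statement as you phrase it, a proper subgraph $H$ may have $\Delta(H)<\Delta(G)$ or even $\Delta(H)<12$, and in the latter case neither the theorem being proved nor Proposition \ref{prop:Havet} guarantees that $H$ admits a $(2,1)$-total labelling from the palette $\{0,\dots,\Delta(G)+2\}$ (Proposition \ref{prop:Havet}(1) only gives $\lambda_2^T(H)\le \chi+\chi'\le 4+\Delta(H)+1$, which can exceed $\Delta(G)+2$ when $\Delta(H)\le 11$). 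So the sentence ``every $H\subseteq G$ with fewer vertices-plus-edges admits a $(\Delta+2)$-labelling'' is not available to you. The paper avoids this precisely by proving the technically stronger Theorem \ref{Thm:new2TL}, parameterized by a fixed $M\ge 12$ with $\Delta\le M$ and palette of size $M+3$, and taking the minimal counterexample with respect to that statement; you need this (or an explicit separate treatment of small-$\Delta$ subgraphs) before any reduction can be run.

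The second gap is that the heart of the argument --- which configurations are reducible and which transfer rules actually recover the deficit --- is left unspecified, and the purely local scheme you gesture at (``a large vertex sends a prescribed amount across each incident edge leading to a small neighbour'') is not obviously sufficient. A $2$-vertex carries charge $-2$ but has only two neighbours, and a $\Delta$-vertex can be adjacent to many $3$-vertices while also being incident to up to $\Delta$ triangles each demanding $1/2$; naive per-edge payments do not visibly leave such a vertex nonnegative. The paper's key additional idea is nonlocal: it shows $G$ contains no $k$-alternator (Lemma \ref{lem:4structure}, whose reducibility rests on the Borodin--Kostochka--Woodall list edge colouring lemma for bipartite graphs), and deduces a system of ``$j$-masters'' in which every $2$- or $3$-vertex receives a full unit of charge from a single designated large neighbour, while each large vertex serves as a master for at most two small vertices (properties $(C4)$ and $(C5)$). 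Combined with the specific local lemmas \ref{lem:6structure}(a)--(e), this is what makes the final verification close. Your sketch contains no substitute for this matching-type structure, so as it stands the discharging cannot be verified.
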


The lower bound of our result is trivial by (2) of Proposition
\ref{prop:Havet}. For the upper bound, we prove a conclusion which
is slightly stronger as follows.

\begin{thm}\label{Thm:new2TL}
Let $M\geq12$ be an integer and let $G$ be a planar graph with
maximum degree $\Delta\leq M$. Then $\lambda_2^T(G)\leq M+2$. In
particular, $\lambda_2^T(G)\leq\Delta+2$ if $M=\Delta$.
\end{thm}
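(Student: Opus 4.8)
The plan is to argue by contradiction via the discharging method, the standard tool for such results. Suppose $G$ is a counterexample to Theorem~\ref{Thm:new2TL} with $|V(G)|+|E(G)|$ minimum, so $G$ is a planar graph with $\Delta\le M$ that admits no $(M+2)$-$(2,1)$-total labelling, while every graph with fewer vertices and edges does. First I would establish a list of \emph{reducible configurations}: local structures that cannot appear in $G$. Typical examples are: $G$ has no vertex of degree $1$ (a pendant edge is easily labelled last); $G$ has minimum degree at least some value; two small-degree vertices cannot be adjacent; a $k$-vertex for small $k$ cannot lie on too many triangles or be adjacent to too many other small vertices; and so on. Each such claim is proved by deleting the offending edge/vertex, invoking minimality to $(2,1)$-total-label the smaller graph, and then counting the forbidden colors at the uncolored elements — since we have $M+3$ colors available and $M\ge 12$, the arithmetic (each edge forbids a bounded number of colors at its endpoints, each vertex forbids $d=2$ colors on either side of its own color, etc.) leaves a free choice. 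I expect the delicate point here to be the ``$\pm 2$'' interval constraints between a vertex color and an incident edge color, which make the color-counting more subtle than in ordinary total coloring; one often needs to recolor a neighboring element first to create room.

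Next I would set up the discharging. Assign to each vertex $v$ the charge $d(v)-4$ and to each face $f$ the charge $d(f)-4$; by Euler's formula $\sum_{v}(d(v)-4)+\sum_{f}(d(f)-4)=-8<0$. Then I design discharging rules by which $4^+$-faces and high-degree vertices send charge to $3$-faces and to low-degree vertices, and finally argue that after redistribution every vertex and every face has nonnegative charge, contradicting the negative total. The rules must be calibrated so that, using the reducible configurations, each $3$-face receives enough from its incident $5^+$-vertices to reach $0$, and each $2$- and $3$-vertex (the ``needy'' vertices, since $4$-vertices and $4^+$-faces already start nonnegative) is compensated by its large neighbors. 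Because $\Delta$ can be as large as $M\ge 12$ while the ``safe'' threshold is around $M+3$ colors, the reducibility bounds will say things like ``a $k$-vertex with $k$ small has all but a bounded number of its neighbors of degree close to $\Delta$,'' which is exactly what is needed to push enough charge.

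The main obstacle, as usual in these arguments, will be the verification that the chosen configurations are genuinely reducible under the $(2,1)$-total labelling constraint — in particular handling the cases where a low-degree vertex lies on one or two triangles, where simply deleting an edge does not immediately work and one must uncolor and recolor a small cluster of elements, possibly using a Kempe-type exchange along a bichromatic path among edges. A secondary obstacle is bookkeeping: one must be sure the list of reducible configurations is rich enough that the discharging closes, while each individual configuration's reducibility proof still goes through with only $M+3\ge 15$ colors. Once both halves are in place, the contradiction is immediate and the theorem follows; the ``in particular'' clause is just the special case $M=\Delta$, and combined with Proposition~\ref{prop:Havet}(2) this yields Theorem~\ref{Thm:2TL}.
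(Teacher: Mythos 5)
Your outline matches the paper's strategy at the top level: minimal counterexample, reducible configurations proved by deleting elements and counting forbidden colors among the $M+3$ available, then discharging with initial charge $d(x)-4$ and Euler's formula giving total charge $-8$. But as written this is a plan, not a proof: every substantive step is deferred (``I would establish a list of reducible configurations,'' ``I design discharging rules''), and the statement does not follow until those lists are exhibited and verified. The color-counting arithmetic you gesture at does appear in the paper (e.g.\ $d(u)+d(v)\ge M-1$ for every edge, and $d(u)+d(v)\ge M+2$ when $\min\{d(u),d(v)\}\le\lfloor(M+2)/4\rfloor$), as do the recoloring tricks for triangles; so far your intuition is sound, if unexecuted.

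The concrete gap is that your plan for the ``needy'' $2$- and $3$-vertices --- ``compensated by its large neighbors'' --- does not close as stated, and the paper needs a genuinely different device there. A $2$-vertex must gain $2$ units and a $3$-vertex $1$ unit, while their high-degree neighbors must simultaneously pay up to $\tfrac12$ to each incident triangle; if every large vertex had to subsidize \emph{all} of its small neighbors, a vertex of degree $M-1$ adjacent to many $3$-vertices would go negative. The paper resolves this with the $k$-alternator / $k$-master machinery of Borodin--Kostochka--Woodall and Wu--Wang: it shows $G$ contains no $k$-alternator (a reducibility argument that itself relies on list edge colorability of bipartite graphs with $f(uv)=\max\{d(u),d(v)\}$), deduces the existence of a matching-like subgraph assigning to each small vertex a designated $3$-master, and proves each large vertex is a $3$-master of at most two small vertices. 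The discharging rule then sends charge $1$ along master edges only, which is what makes the final verification work. Without this (or an equivalent mechanism), your discharging would not balance, so this is the missing idea rather than mere bookkeeping.
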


\pn The interesting case of Theorem \ref{Thm:new2TL} is when $M =
\Delta(G)$. Indeed, Theorem \ref{Thm:new2TL} is only a technical
strengthening of Theorem \ref{Thm:2TL}. But without it we would get
complications when considering a subgraph $H\subset G$ such that
$\Delta(H)<\Delta(G)$.

Let $G$ be a minimal counterexample in terms of $|V|+|E|$ to Theorem
\ref{Thm:new2TL}. By the minimality of $G$, any proper subgraph of
$G$ is (2,1)-total labelable. It is not difficult to see that $G$ is
connected. In Section 2, we obtain some structural properties of our
minimal counterexample $G$. In Section 3, we complete the proof with
discharging method.

\section{Structural properties}
From now on, we will use without distinction the terms \emph{color}
and \emph{label}. Let $X$ be a set, we usually denote the
cardinality of $X$ by $|X|$. A partial (2,1)-total labelling of $G$
is a function $\Phi$ from $X\subseteq V(G)\cup E(G)$ to the color
interval $C=\{0,1,\cdots,k\}$ with $|C|=k+1=M+3$ such that the color
of element $x\in X$, denoted by $\Phi(x)$, satisfies all the
conditions in the definition of (2,1)-total labelling of graphs.
Next, we need some notations to make our description concise.

$E_\Phi(v)=\{\Phi(e)\ |\ e\in E\ \mbox{is incident with vertex
$v$}\}$ for $v\in V$;

$I_\Phi(x)=\{\Phi(x)-1,\Phi(x),\Phi(x)+1\}\cap C$ for $x\in V\cup
E$;

$F_\Phi(v)=E_\Phi(v)\cup I_\Phi(v)$ for $v\in V$;

$A_\Phi(uv)=C\ \backslash\  \lk F_\Phi(u)\cup F_\Phi(v)\rk$ for
$uv\in E$;

$A_\Phi(u)=C\ \backslash\ \left[\lk\F\rk\cup\lk\E\rk\right]$ for
$u\in V$.

In all the notations above, only elements got colors under the
partial (2,1)-total labelling $\Phi$ are counted in our notations.
For example, if $v$ is not colored under $\Phi$, then
$F_\Phi(v)=E_\Phi(v)$ by our definition. It is not difficult to see
that $A_\Phi(uv)$ (resp. $A_\Phi(u)$) is just the set of colors
which are still available for labelling $uv$ (resp. $u$) under the
partial (2,1)-total labelling $\Phi$. Thus, if $|A_\Phi(uv)|\geq1$
(res. $|A_\Phi(u)|\geq1$), then we can (2,1)-total labelling edge
$uv$ (res. vertex $u$) properly under $\Phi$.

To prove the main result, we give the following lemmas.

\begin{lem}\label{lem:1structure}
For each $uv\in E$, we have $d(u)+d(v)\geq M-1$.
\end{lem}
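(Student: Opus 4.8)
The plan is to argue by contradiction using the minimality of $G$. Suppose there is an edge $uv \in E$ with $d(u) + d(v) \leq M - 2$. The strategy is to delete one of the two vertices (or the edge), invoke the minimality of $G$ to obtain a partial $(2,1)$-total labelling $\Phi$ of the remaining graph, and then show that there are enough free colors to extend $\Phi$ back to $u$, $v$ and the edge $uv$, contradicting the fact that $G$ is a counterexample. Since $\lambda_2^T$ is monotone under subgraphs in the relevant sense (any proper subgraph of $G$ is $(M+2)$-$(2,1)$-total labelable by minimality), such an extension argument is exactly what is available.

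Concretely, I would first handle the edge $uv$. The number of colors already forbidden at $uv$ is controlled by $|F_\Phi(u) \cup F_\Phi(v)|$; each endpoint $w$ contributes at most $(d(w) - 1)$ colors from edges incident to $w$ other than $uv$ (once those are colored), plus the at most $3$ colors in the interval $I_\Phi(w)$ around $\Phi(w)$. Thus $|A_\Phi(uv)| \geq (M+3) - \bigl[(d(u)-1) + (d(v)-1) + 6\bigr] = M + 3 - d(u) - d(v) - 4 = M - 1 - (d(u)+d(v))$. With $d(u) + d(v) \leq M - 2$ this gives $|A_\Phi(uv)| \geq 1$, so $uv$ can be colored. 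The more delicate part is ensuring $u$ and $v$ themselves can be colored, since a vertex $w$ is constrained both by the colors on its $d(w)$ incident edges (via the distance-$2$ condition, each incident edge color forbids an interval of up to $3$ values) and by the colors on its neighbors. The careful bookkeeping here is to delete $v$ first, label the smaller structure, then greedily recolor: color $u$ (using $|A_\Phi(u)| \geq 1$, which needs $d(u)$ small relative to $M$), then the pendant-type edges at $v$, then $v$, then finally $uv$ with the count above. One must order the recoloring so that at each step the bound $M+3$ on the palette size beats the number of forbidden colors, and this is where the hypothesis $M \geq 12$ (and hence a reasonably large palette) together with $d(u)+d(v) \leq M-2$ does the work.

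The main obstacle I anticipate is the vertex-labeling step rather than the edge step: when one of $d(u), d(v)$ is large (close to $M$), the interval-expansion factor of $3$ coming from the $(2,1)$ separation condition makes the naive count $3d(w)$ exceed the palette size $M+3$. The resolution is the standard one in these $(d,1)$-labeling arguments: rather than deleting a vertex outright, delete the edge $uv$ only, so that $\Phi$ is defined on all of $G - uv$; then $u$ and $v$ are already colored and only $uv$ needs a color, for which the computation above suffices. This avoids recoloring vertices entirely and turns the lemma into the one-line palette count $|A_\Phi(uv)| \geq M - 1 - (d(u)+d(v)) \geq 1$. The only thing to verify is that deleting a single edge from a connected minimal counterexample still lands us in a proper subgraph to which minimality applies — which is immediate — and that the colored edge receives a color respecting the separation from both $\Phi(u)$ and $\Phi(v)$, which is exactly what the definition of $F_\Phi$ and $A_\Phi(uv)$ encodes. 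So the clean proof is: take $\Phi$ on $G-uv$, count forbidden colors at $uv$, conclude $|A_\Phi(uv)|\geq 1$, extend, contradiction.
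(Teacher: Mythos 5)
Your final argument is exactly the paper's proof: remove the edge $uv$, apply minimality to get a labelling $\Phi$ of $G-uv$, and count $|A_\Phi(uv)|\geq (M+3)-\bigl[(d(u)-1)+(d(v)-1)+6\bigr]\geq 1$ when $d(u)+d(v)\leq M-2$. The preliminary discussion of deleting a vertex is unnecessary detour, but the count you settle on is correct and identical to the paper's.
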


\begin{proof}
Assume that there is an edge $uv\in E$ such that $d(u)+d(v)\leq
M-2$. By the minimality of $G$, $G-e$ has a (2,1)-total labelling
$\Phi$ with color interval $C$. Since
$|A_\Phi(uv)|=|C|-|F_\Phi(u)\cup
F_\Phi(v)|\geq|C|-(d(u)+d(v)-2+3\times2)\geq |C|-(M+2)\geq1$, we can
extend $\Phi$ from subgraph $G-e$ to $G$, a contradiction.
\end{proof}

\begin{lem}\label{lem:2structure}
For any edge $e=uv\in E$ with
$\min\{d(u),d(v)\}\leq\lf\DF{M+2}{4}\rf$, we have $d(u)+d(v)\geq
M+2$.
\end{lem}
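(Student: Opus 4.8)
The plan is to argue by contradiction using the minimality of $G$, exactly as in the proof of Lemma~\ref{lem:1structure}, but with a more careful count that exploits the small degree of one endpoint. Suppose $e=uv\in E$ with, say, $d(u)\le \M$ and $d(u)+d(v)\le M+1$. By minimality, $G-e$ has a partial $(2,1)$-total labelling $\Phi$ on its vertices and edges with color interval $C$, $|C|=M+3$. The idea is that we want to extend $\Phi$ to $G$, i.e.\ to color the one missing edge $uv$; to do this it suffices to show $|A_\Phi(uv)|\ge 1$. The naive bound $|A_\Phi(uv)|\ge |C|-(d(u)+d(v)-2)-6$ is not quite enough when $d(u)+d(v)=M+1$, so the refinement must come from the fact that $u$ has few incident edges: many of the forbidden colors for $uv$ coming from $u$'s side are ``wasted'' because the intervals $I_\Phi(u)$, the color $\Phi(u)$, and the few edge-colors at $u$ overlap heavily, or because one can first \emph{recolor} $u$ to free up room.

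Concretely, I would proceed in two stages. First, erase the color of $u$ as well (so we now must recolor both $u$ and the edge $uv$). Counting the colors forbidden for the edge $uv$: these are $E_\Phi(v)$ (at most $d(v)-1$ colors, since $uv$ itself is uncolored), $I_\Phi(v)$ (at most $3$ colors), the colors on the $d(u)-1$ other edges at $u$, and nothing from $u$'s vertex color since it is now erased; but we also need room to then recolor $u$ itself. The cleanest route: show that there is a choice of color for $uv$ avoiding $E_\Phi(u)\setminus\{\cdot\}$, $E_\Phi(v)$, $I_\Phi(v)$, together with the constraint that after fixing $\Phi(uv)$ the set $A_\Phi(u)$ (colors still legal at $u$) remains nonempty. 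Since $\deg u=d(u)\le\M$, the number of colors forbidden at $u$ is at most $d(u)+ (\text{a few from the }I_\Phi \text{ of its edges})$, and with $|C|=M+3$ and $d(u)$ small there is slack of roughly $M+3 - d(u) - (\text{small})$ at $u$; the hypothesis $d(u)+d(v)\le M+1$ makes the corresponding count for $uv$ come out to at least $M+3-(d(v)-1)-3-(d(u)-1) = M+2-d(u)-d(v)\ge 1$, and one shows these two choices can be made compatibly because the total deficiency is controlled by $d(u)+d(v)$. So the final contradiction is: $G-e$ is labelable, the labelling extends to $G$, contradicting that $G$ is a counterexample.

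The step I expect to be the real obstacle is making the compatibility argument between ``coloring $uv$'' and ``recoloring $u$'' fully rigorous, i.e.\ handling the interaction of the $\pm1$ bands: a color placed on $uv$ forbids a length-$3$ band at $u$, and one must ensure that after this there is still a legal vertex color for $u$, which is where the precise value $\M=\lf\frac{M+2}{4}\rf$ enters (each recoloring choice ``costs'' up to $3$ colors, so one needs $d(u)$ bounded by roughly $(M+2)/4$ for a greedy pigeonhole to close). A secondary subtlety is the boundary/degenerate cases where $d(u)$ is very small (e.g.\ $d(u)=1$), where $I_\Phi(v)$ or $E_\Phi(v)$ may overlap and the count is actually easier; these should be dispatched separately or absorbed into the inequalities. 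Once the counting with the extra erased vertex is set up correctly, the argument is a routine extension of Lemma~\ref{lem:1structure}.
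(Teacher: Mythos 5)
Your proposal follows essentially the same route as the paper: erase the color of $u$, color the edge $uv$ first using the count $|C|-(d(u)-1)-\bigl((d(v)-1)+3\bigr)=M+2-d(u)-d(v)\ge 1$, and then recolor $u$ last. The ``compatibility'' step you flag as the main obstacle is in fact automatic: because $u$ is colored after $uv$, every color forbidden at $u$ lies among the $d(u)$ neighbor colors and the $3d(u)$ colors in the bands $I_\Phi(e)$ of its incident edges, so $|A_\Phi(u)|\ge M+3-4d(u)\ge M+3-4\M\ge 1$ regardless of which color was placed on $uv$, and no interaction between the two choices needs to be controlled.
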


\begin{proof}
Suppose there is an edge $uv\in E$ such that
$d(u)\leq\lf\DF{M+2}{4}\rf$ and $d(u)+d(v)\leq M+1$. By the
minimality of $G$, $G-e$ is (2,1)-total labelable with color
interval $C$. Erase the color of vertex $u$, and denote this partial
(2,1)-total labelling by $\Phi$. Then
$|A_\Phi(uv)|\geq|C|-|F_\Phi(u)|-|F_\Phi(v)|=|C|-|E_\Phi(u)|-|F_\Phi(v)|\geq|C|-(d(u)+d(v)-2+3)\geq
|C|-(M+2)\geq1$ which implies that $uv$ can be properly colored. We
still denote the labelling by $\Phi$ after $uv$ is colored. Next,
for vertex $u$, $|A_\Phi(u)|\geq |C|-|\F|-|\E|\geq M+3-4d(u)\geq1$.
Thus, we can extend the partial (2,1)-total labelling $\Phi$ to $G$,
a contradiction.
\end{proof}

A \emph{$k$-alternator} $( 3\leq k\leq\M )$ is a bipartite subgraph
$B(X,Y)$ of graph $G$ such that $d_B(x)=d_G(x)\leq k$ for each $x\in
X$ and $d_B(y)\geq d_G(y)+k-M$ for each $y\in Y$. This concept was
first introduced by Borodin, Kostochka and Woodall \cite{BKW} and
generalized by Wu and Wang \cite{WW}.

\begin{lem}[\cite{BKW}]\label{lem:3structure}
A bipartite graph $G$ is edge $f$-choosable where
$f(uv)=\max\{d(u),d(v)\}$ for any $uv\in E(G)$.
\end{lem}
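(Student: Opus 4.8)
The statement to prove is Lemma~\ref{lem:3structure}, namely that a bipartite graph $G$ is edge $f$-choosable with $f(uv)=\max\{d(u),d(v)\}$. Since this lemma is attributed to Borodin, Kostochka and Woodall, the ``proof'' here would be a recollection of their argument; let me sketch the natural approach.

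\textbf{Plan.}
The plan is to proceed by induction on $|E(G)|$, combined with an orientation argument that exploits the bipartite structure. First I would orient each edge $uv$ of $G$ from the endpoint of smaller degree to the one of larger degree (breaking ties in a fixed but arbitrary way, say according to the bipartition $B(X,Y)$, orienting toward $Y$). The key observation is that in this orientation the out-degree $d^+(v)$ of every vertex $v$ satisfies $d^+(v)\le$ (something controllable), but more usefully: for any edge $uv$, the list size $f(uv)=\max\{d(u),d(v)\}$ is at least $d^+(u)+\text{(available room)}$. Actually the cleanest route is the classical kernel/Galvin-type argument: form the line graph $L(G)$, orient it suitably, and show it has the ``kernel-perfect'' property so that the Bondy--Boll{\'o}bas--Harris / Galvin machinery applies. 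Concretely, one builds an orientation $D$ of $L(G)$ in which every induced subdigraph has a kernel (an independent dominating set), and in which $d^+_D(e)< f(e)$ for every edge $e$; then $L(G)$ is $f$-choosable. For bipartite $G$, such an orientation of $L(G)$ arises from properly $2$-edge-colouring... no, rather from the fact that $L(G)$ of a bipartite graph, suitably oriented by ``at each vertex of $G$ direct the colour classes cyclically,'' has all the needed kernels because the relevant tournaments on the edges at a single vertex of $G$ are transitive.

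\textbf{Key steps in order.}
(i) Reduce to showing $L(G)$ admits an orientation $D$ with $d^+_D(e)\le f(e)-1$ for all $e$ and such that every induced subdigraph of $D$ has a kernel. (ii) Construct $D$: at each vertex $v\in V(G)$ the edges incident to $v$ form a clique $K_v$ in $L(G)$; fix an arbitrary linear order of these edges and orient $K_v$ transitively according to it. Because $G$ is bipartite, each edge $e=uv$ lies in exactly the two cliques $K_u$ and $K_v$, and orienting $K_u$ toward $Y$-side preferences and $K_v$ likewise can be made consistent so that the last edge in the order at the larger-degree endpoint absorbs the slack. This gives $d^+_D(e)\le (d(u)-1)+0$ or $(d(v)-1)+0$ appropriately, i.e. $d^+_D(e)\le f(e)-1$. (iii) Verify the kernel condition: any induced subdigraph $D'$ restricted to the edges at a single $v$ is a transitive tournament, which has a kernel (its sink); globally, one uses that $L(G)$ for bipartite $G$ decomposes its cliques over the two sides and a kernel can be assembled greedily from the two ``directions,'' invoking the Borodin--Kostochka--Woodall construction. (iv) Apply the Bondy--Boll\'obas--Harris lemma: an orientation with all induced subdigraphs having kernels and $d^+(e)<|L(e)|$ forces an $L$-colouring of the vertices of $L(G)$, i.e. an $f$-edge-colouring of $G$.

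\textbf{Main obstacle.}
The delicate point is step (iii): producing a single global orientation $D$ of the line graph that is simultaneously \emph{kernel-perfect} (every induced subdigraph has a kernel) and respects the out-degree bound $d^+_D(e)\le f(e)-1$ at every edge. The out-degree bound alone is easy from the transitive orientation of each $K_v$, and kernel-perfectness of each $K_v$ alone is trivial; the work is checking that gluing the two cliques $K_u,K_v$ through the shared edge $e=uv$ over all edges does not create a kernel-less induced subdigraph. Here is exactly where bipartiteness is essential --- in a non-bipartite $G$ one can get directed odd cycles in $L(G)$ with no kernel --- and the verification amounts to the argument of Borodin, Kostochka and Woodall that the chosen orientation (direct each $K_v$ from low-priority to high-priority edges, with priorities coherent across the bipartition) never produces such obstructions. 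Once that structural fact is in hand, the choosability conclusion is immediate from the standard kernel method, so I would spend essentially all the effort on the consistent orientation of the line graph and its kernel-perfectness.
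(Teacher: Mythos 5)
The paper does not actually prove this lemma; it is quoted verbatim from Borodin, Kostochka and Woodall \cite{BKW}, so there is no in-paper argument to compare yours against. Judged on its own merits, your sketch identifies the right framework (the kernel method: orient $L(G)$ so that every induced subdigraph has a kernel and $d^{+}(e)\le f(e)-1$, then apply the Bondy--Boll\'obas--Harris lemma), but the crucial quantitative step is wrong as stated. If you orient each clique $K_v$ of $L(G)$ transitively according to an \emph{arbitrary} linear order, an edge $e=uv$ can be the source of both $K_u$ and $K_v$, giving $d^{+}(e)=(d(u)-1)+(d(v)-1)$, which exceeds $f(e)-1=\max\{d(u),d(v)\}-1$ whenever $\min\{d(u),d(v)\}\ge 2$. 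The claim that the orders ``can be made consistent so that the last edge in the order at the larger-degree endpoint absorbs the slack'' is exactly the point that needs proof, and it is not supplied. Moreover, the standard coherent choice --- order the edges at each vertex by a proper $\Delta$-edge-colouring (K\"onig), directing toward smaller colours on one side of the bipartition and toward larger colours on the other --- only yields $d^{+}(e)\le (c(e)-1)+(\Delta-c(e))=\Delta-1$. That proves Galvin's theorem ($\Delta$-edge-choosability of bipartite multigraphs) but not the \emph{local} bound $\max\{d(u),d(v)\}$ asserted in the lemma; strengthening $\Delta-1$ to $\max\{d(u),d(v)\}-1$ is precisely the additional content of the Borodin--Kostochka--Woodall result, and your proposal contains no mechanism for achieving it.

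The second gap is your step (iii): kernel-perfectness of the chosen orientation is asserted (``invoking the Borodin--Kostochka--Woodall construction'') rather than argued. For Galvin-type orientations of $L(G)$ this is a genuine lemma (provable, for instance, via stable matchings or by induction on the induced subdigraph), and it interacts with whatever modification of the orientation you use to obtain the local out-degree bound, so it cannot simply be quoted once the orientation has been altered. In short, the architecture of your proof is the correct one, but both load-bearing steps --- the out-degree bound for the local function $f$ and the kernel-perfectness of an orientation realizing that bound --- are missing.
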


\begin{lem}\label{lem:4structure}
There is no $k$-alternator $B(X,Y)$ in $G$ for any integer $k$ with
$3\leq k\leq\M$.
\end{lem}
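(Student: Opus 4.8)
The plan is to argue by contradiction: suppose $B(X,Y)$ is a $k$-alternator in $G$ for some $k$ with $3\le k\le\lfloor\frac{M+2}{4}\rfloor$, and derive that a slightly smaller subgraph of $G$ admits a partial $(2,1)$-total labelling that can then be extended to all of $G$, contradicting minimality. The subgraph to remove is $B$ together with the colors on all vertices of $X$: by minimality of $G$, the graph $G-E(B)$ has a $(2,1)$-total labelling $\Phi_0$ with color interval $C$; erase the colors of all vertices in $X$ (and possibly of edges of $B$ that are still uncolored, which are all of them) to obtain a partial labelling $\Phi$. What remains to do is (i) color the edges of $B$ and (ii) color the vertices of $X$, in such a way that every constraint of a $(2,1)$-total labelling is met. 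Step (ii) is easy once (i) is done: each $x\in X$ has $d_G(x)=d_B(x)\le k$, so after all edges are colored the forbidden set for $x$ has size at most $4\,d_G(x)\le 4k\le M+2<|C|$, exactly as in the proof of Lemma~\ref{lem:2structure}; hence a color remains for $x$. So the real content is step (i): choosing colors for $E(B)$.

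For step (i) the idea is to reduce edge-coloring of $B$ to an edge-list-coloring problem and invoke Lemma~\ref{lem:3structure}. For each edge $e=xy\in E(B)$ with $x\in X$, $y\in Y$, the colors that are \emph{forbidden} for $e$ come from three sources: the already-fixed colors $I_{\Phi}(y)$ of the (still colored) vertex $y$ together with its at most $d_G(y)-d_B(y)$ incident edges that survived in $G-E(B)$; the not-yet-fixed vertex $x$, which contributes nothing since its color was erased; and the other edges of $B$ incident to $e$, which are handled by the $f$-choosability. So after discarding the $B$-neighboring edges, the list of admissible colors for $e$ has size at least
\[
|C|-\Big(3+3(d_G(y)-d_B(y))\Big)=M+3-3-3\big(d_G(y)-d_B(y)\big).
\]
Using the defining inequality of a $k$-alternator, $d_B(y)\ge d_G(y)+k-M$, i.e. $d_G(y)-d_B(y)\le M-k$, this is at least $M-3(M-k)=3k-2M$, which is far too small. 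So a crude count does not work, and one must instead let the list for $e$ be $C$ minus only the constraints from the $Y$-side, giving a list $L(e)$ with
\[
|L(e)|\ \ge\ |C|-3-3\big(d_G(y)-d_B(y)\big),
\]
and then apply Lemma~\ref{lem:3structure} with $f(e)=\max\{d_B(x),d_B(y)\}$: we need $|L(e)|\ge f(e)$ for every $e$. When the max is attained at $x$ this reads $|L(e)|\ge d_B(x)$, which is generous since $d_B(x)\le k$ and $|L(e)|$ is of order $M$; the delicate case is when the max is $d_B(y)$, where we need $M-3(d_G(y)-d_B(y))\ge d_B(y)$, equivalently $M\ge 3d_G(y)-2d_B(y)$. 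Here one uses $d_B(y)\ge d_G(y)+k-M$ once more to bound $3d_G(y)-2d_B(y)\le d_G(y)+2(M-k)$, so it suffices that $d_G(y)\le 3k-M$; but $d_G(y)$ can be as large as $M$, so this still fails for large-degree $y$.

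The resolution — and the step I expect to be the main obstacle — is to \emph{not} treat all of $Y$ uniformly but to observe that a vertex $y\in Y$ of large $G$-degree must, by the alternator condition, have correspondingly large $B$-degree (at least $d_G(y)+k-M$), so only a bounded number of its incident edges lie outside $B$; the list size $|L(e)|\ge M-3(d_G(y)-d_B(y))$ is therefore controlled by $d_G(y)-d_B(y)$, not by $d_G(y)$ itself, and one should compare it against $d_B(y)$ \emph{via} the alternator inequality rewritten as $d_B(y)\le d_G(y)$ and $M-d_B(y)\ge 3(d_G(y)-d_B(y))$ — the last being exactly $M\ge 3d_G(y)-2d_B(y)$, which by Lemma~\ref{lem:1structure} (every edge $xy$ has $d_G(x)+d_G(y)\ge M-1$, hence since $d_G(x)\le k$ we get $d_G(y)\ge M-1-k$) and the alternator bound can be made to hold precisely when $k\le\lfloor\frac{M+2}{4}\rfloor$. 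Thus the argument is: fix lists $L(e)=C\setminus\big(I_\Phi(y)\cup E_\Phi(y)\big)$ restricted to what the $Y$-endpoint forbids; verify $|L(e)|\ge\max\{d_B(x),d_B(y)\}$ for all $e\in E(B)$ using Lemmas~\ref{lem:1structure}, the alternator inequality, and the bound $k\le\lfloor\frac{M+2}{4}\rfloor$; apply Lemma~\ref{lem:3structure} to $L$-color $E(B)$; then color each $x\in X$ greedily as above; finally observe that no constraint involving two elements both outside $G-E(B)$ was violated, since the $x$'s are pairwise non-adjacent (bipartiteness) and each edge–vertex and edge–edge constraint at a vertex of $X$ was built into the greedy step. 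This contradicts the minimality of $G$, proving the lemma.
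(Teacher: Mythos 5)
Your overall strategy is the paper's strategy: delete the edges of $B$ (the paper deletes the vertices of $X$, which is the same thing since $d_B(x)=d_G(x)$), color $E(B)$ by reducing to edge list-coloring of a bipartite graph via Lemma~\ref{lem:3structure} with $f(xy)=\max\{d_B(x),d_B(y)\}$, and finish by greedily coloring each $x\in X$ using $4d_G(x)\le 4k\le M+2<|C|$. Your step (ii) and the final greedy step are exactly right.

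The genuine gap is in your count of the forbidden colors for an edge $e=xy$ coming from the $Y$-side. You charge $3$ forbidden colors for \emph{each} already-colored edge incident to $y$, writing $|L(e)|\ge |C|-3-3\bigl(d_G(y)-d_B(y)\bigr)$. But adjacent edges are only required to receive \emph{distinct} colors; the separation-by-$d$ condition applies only to an edge and its incident \emph{vertex}. So each of the $d_G(y)-d_B(y)$ surviving edges at $y$ forbids exactly one color, and only the vertex color $\Phi(y)$ forbids an interval of three. The correct bound is
\[
|L(e)|\;\ge\;|C|-3-\bigl(d_G(y)-d_B(y)\bigr)\;=\;M-\bigl(d_G(y)-d_B(y)\bigr),
\]
from which both required inequalities are immediate: $M-(d_G(y)-d_B(y))\ge M-(M-d_B(y))=d_B(y)$ since $d_G(y)\le M$, and $M-(d_G(y)-d_B(y))\ge M-(M-k)=k\ge d_B(x)$ since the alternator condition gives $d_G(y)-d_B(y)\le M-k$. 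This is precisely the paper's computation. With your spurious factor of $3$, the needed inequality $M-3(d_G(y)-d_B(y))\ge d_B(y)$ is equivalent to $M\ge 3d_G(y)-2d_B(y)$, which is simply false in general (take $d_G(y)=M$ and $d_B(y)=k$); your closing ``resolution'' paragraph does not repair this --- it restates the same failed inequality and asserts without justification that it ``can be made to hold'' for $k\le\lfloor(M+2)/4\rfloor$. Once the count is corrected, no resolution is needed and the rest of your argument goes through. One further small point: the independence of $X$ in $G$ (needed so that uncolored vertices of $X$ impose no constraints on one another) follows not from bipartiteness of $B$ but from $d_B(x)=d_G(x)$ together with Lemma~\ref{lem:2structure}, as the paper notes.
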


\begin{proof}
Suppose that there exits a $k$-alternator $B(X,Y)$ in $G$.
Obviously, $X$ is an independent set of vertices in graph $G$ by
Lemma \ref{lem:2structure}. By the minimality of $G$, the subgraph
$G[V(G)\backslash X]$ has a (2,1)-total labelling $\Phi$ with color
interval $C$. Then for each $xy\in B(X,Y)$,
$|A_\Phi(xy)|\geq|C|-|F_\Phi(y)|-|F_\Phi(x)|\geq
|C|-\left(d_G(y)-d_B(y)+3\right)-0\geq
M+3-\left(M-d_B(y)+3\right)\geq d_B(y)$ and $|A_\Phi(xy)|\geq
|C|-\left(d_G(y)-d_B(y)+3\right)\geq M+3-(M+3-k)\geq k$ because
$B(X,Y)$ is a $k$-alternator. Therefore,
$|A(xy)|\geq\max\{d_B(y),d_B(x)\}$. By Lemma \ref{lem:3structure},
it follows that $E(B(X,Y))$ can be colored properly. Denote this new
partial (2,1)-total labelling by $\Phi'$. Then for each vertex $x\in
X$, $|A_{\Phi'}(x)|\geq|C|-|\cup_{z\in N(x)}\Phi'(z)|-|\cup_{e\ni
x}I_{\Phi'}(e)|\geq |C|-4d(x)\geq M+3-(M+2)\geq1$ because
$d_G(x)\leq k\leq\M$. Thus, we can extend the partial (2,1)-total
labelling  $\Phi$ to $G$, a contradiction.
\end{proof}

\begin{lem}\label{lem:5structure}
Let $X_k=\{x\in V(G)\ \big|\ d_G(x)\leq k\}$ and $Y_k=\cup_{x\in
X_k}N(x)$ for any integer $k$ with $3\leq k\leq\M$. If
$X_k\neq\emptyset$, then there exists a bipartite subgraph $M_k$ of
$G$ with partite sets $X_k$ and $Y_k$ such that $d_{M_k}(x)=1$ for
each $x\in X_k$ and $d_{M_k}(y)\leq k-1$ for each $y\in Y_k$.
\end{lem}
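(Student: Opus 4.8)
The plan is to reduce the existence of $M_k$ to a Hall-type inequality and then kill the bad case using the absence of $k$-alternators, Lemma~\ref{lem:4structure}. First, by Lemma~\ref{lem:2structure}, $X_k$ is independent: if $x,x'\in X_k$ were adjacent then $d_G(x)+d_G(x')\ge M+2$, whereas $d_G(x)+d_G(x')\le 2k\le 2\M<M+2$. In particular $X_k\cap Y_k=\emptyset$, so choosing for each $x\in X_k$ one incident edge (necessarily landing in $Y_k$), subject to no $y\in Y_k$ being chosen more than $k-1$ times, produces exactly a bipartite subgraph of the required shape. Such a choice is an $X_k$-saturating matching in the bipartite graph with parts $X_k$, $Y_k$ and edge set $\{xy\in E(G):x\in X_k,\ y\in Y_k\}$, after replacing each $y\in Y_k$ by $k-1$ twin copies with the same neighbourhood; projecting the twins back yields $M_k$. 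By Hall's theorem such a matching exists provided $(k-1)|N_G(S)|\ge|S|$ for every $S\subseteq X_k$.

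So it suffices to prove $(k-1)|N_G(S)|\ge|S|$ for all $S\subseteq X_k$. Suppose not, and fix $S_0\subseteq X_k$ with $|S_0|>(k-1)|N_G(S_0)|$. I would run a peeling process producing sets $S_0,S_1,\dots$, each $S_{i+1}$ a proper subset of $S_i$. Given a nonempty $S_i\subseteq X_k$ with $|S_i|>(k-1)|N_G(S_i)|$, let $B_i$ be the bipartite subgraph of $G$ with parts $S_i$ and $N_G(S_i)$ and all edges of $G$ between them. For $x\in S_i$ we have $N_G(x)\subseteq N_G(S_i)$, hence $d_{B_i}(x)=d_G(x)\le k$; thus $B_i$ satisfies the first condition in the definition of a $k$-alternator. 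Since $3\le k\le\M$ and $B_i$ is nonempty, Lemma~\ref{lem:4structure} prevents $B_i$ from being a $k$-alternator, so its second condition must fail: some $y_i\in N_G(S_i)$ has $d_{B_i}(y_i)<d_G(y_i)+k-M$. Because $d_G(y_i)\le\Delta\le M$, this gives $|N_G(y_i)\cap S_i|=d_{B_i}(y_i)\le k-1$. Set $S_{i+1}=S_i\setminus N_G(y_i)$.

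The process cannot terminate. Removing $N_G(y_i)$ deletes at most $k-1$ vertices, so $|S_{i+1}|\ge|S_i|-(k-1)$; moreover $S_{i+1}$ contains no neighbour of $y_i$, so $y_i\notin N_G(S_{i+1})$ and $|N_G(S_{i+1})|\le|N_G(S_i)|-1$. Therefore $|S_{i+1}|-(k-1)|N_G(S_{i+1})|\ge|S_i|-(k-1)|N_G(S_i)|>0$, so the strict inequality persists, while $|S_i|$ strictly decreases because $y_i$ has at least one neighbour in $S_i$. A strictly decreasing sequence of nonnegative integers eventually reaches $S_i=\emptyset$, contradicting $|S_i|>(k-1)|N_G(S_i)|=0$. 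Hence no such $S_0$ exists, the Hall condition holds, and $M_k$ exists.

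The crux is the passage from a hypothetical Hall violator to a contradiction with Lemma~\ref{lem:4structure}; everything else is bookkeeping. The naive idea of declaring $G[S_0\cup N_G(S_0)]$ itself a $k$-alternator fails, since $N_G(S_0)$ may contain vertices of degree $M$ with only $k-1$ neighbours in $S_0$, for which $d_B(y)\ge d_G(y)+k-M$ just barely fails. The fix is to peel off such deficient vertices one at a time: each deletion costs at most $k-1$ vertices on the small side but removes at least one vertex on the large side, so the deficiency $|S|-(k-1)|N_G(S)|$ never decreases, which is incompatible with $S$ eventually being emptied.
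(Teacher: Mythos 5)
Your proof is correct, and it fills a gap the paper leaves open: the paper gives no argument for this lemma at all, deferring to Lemma 2.4 of Wu and Wang \cite{WW}. The standard proof there (and in Borodin--Kostochka--Woodall \cite{BKW}) takes a bipartite subgraph between $X_k$ and $Y_k$ with $d(x)\le 1$, $d(y)\le k-1$ and a maximum number of edges, and shows that an unsaturated $x_0\in X_k$ would force the set of vertices reachable from $x_0$ by alternating paths to span a $k$-alternator, contradicting Lemma~\ref{lem:4structure}. Your route rests on the same pivot (non-existence of $k$-alternators) but is organized differently: you first reduce the existence of $M_k$ to the capacitated Hall condition $(k-1)|N_G(S)|\ge |S|$ via vertex-splitting, then refute a hypothetical Hall violator by a peeling process in which Lemma~\ref{lem:4structure} supplies, at each step, a vertex $y_i$ of the large side with $d_{B_i}(y_i)<d_G(y_i)+k-M\le k$, hence at most $k-1$ neighbours in the current small side. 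The bookkeeping is right: deleting $N_G(y_i)\cap S_i$ costs at most $k-1$ vertices on the small side and removes $y_i$ from the neighbourhood, so the deficiency $|S_i|-(k-1)|N_G(S_i)|$ cannot decrease while $|S_i|$ strictly decreases, which is impossible. The preliminary observation that $X_k$ is independent --- needed both for $X_k\cap Y_k=\emptyset$ and for $d_{B_i}(x)=d_G(x)$ as the alternator definition demands --- is correctly derived from Lemma~\ref{lem:2structure} together with $2k\le 2\lfloor(M+2)/4\rfloor<M+2$. What your version buys is a self-contained deficiency argument avoiding an extremal choice and augmenting paths; what the classical argument buys is a direct construction of $M_k$ without passing through Hall's theorem. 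Either way the lemma stands.
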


\begin{proof}
The proof is omitted here since it is similar with the proof of
Lemma 2.4 in Wu and Wang \cite{WW}.
\end{proof}

We call $y$ the $k$-master of $x$ if $xy\in M_k$ and $x\in X_k, y\in
Y_k$. By Lemma \ref{lem:2structure}, if $uv\in E(G)$ satisfies
$d(v)\leq\M$ and $d(u)=M-i$, then $d(v)\geq M+2-d(u)\geq i+2$.
Together with Lemma \ref{lem:5structure}, it follows that each
$(M-i)$-vertex can be a $j$-master of at most $j-1$ vertices, where
$2\leq i+2\leq j\leq\M$. Each $i$-vertex has a $j$-master where
$2\leq i\leq j\leq\M$.

\begin{lem}\label{lem:6structure}
$G$ has the following structural properties.
\begin{itemize}
  \vspace{-2mm}\item [(a)]\ A 4-vertex is adjacent to $8^+$-vertices;

  \vspace{-2mm}\item [(b)]There is no $[d(v_1), d(v_2), d(v_3)]$-face with $d(v_1)=5$,
  $\max\set{d(v_2),\, d(v_3)}\leq6$;

  \vspace{-2mm}\item [(c)]\ If $f=[d(v_1), d(v_2), d(v_3)]$ is a triangle face with $d(v_1)=5$,
  $d(v_2)=6$ and $d(v_3)=7$, then $v_1$ has no other 6-neighbors
  besides $v_2$.

  \vspace{-2mm}\item [(d)]\ If a vertex $v$ is adjacent to two vertices $v_1$,
  $v_2$ such that $2\leq d(v_1)=d(v_2)=M+2-d(v)\leq3$, then every
  face incident with $vv_1$ or $vv_2$ must be a $4^+$-face.

  \vspace{-2mm}\item [(e)]\ Each $\Delta$-vertex can be adjacent to at most one
  2-vertex.
\end{itemize}

\begin{figure}
\begin{center}
  \includegraphics[width=11cm,height=3.5cm]{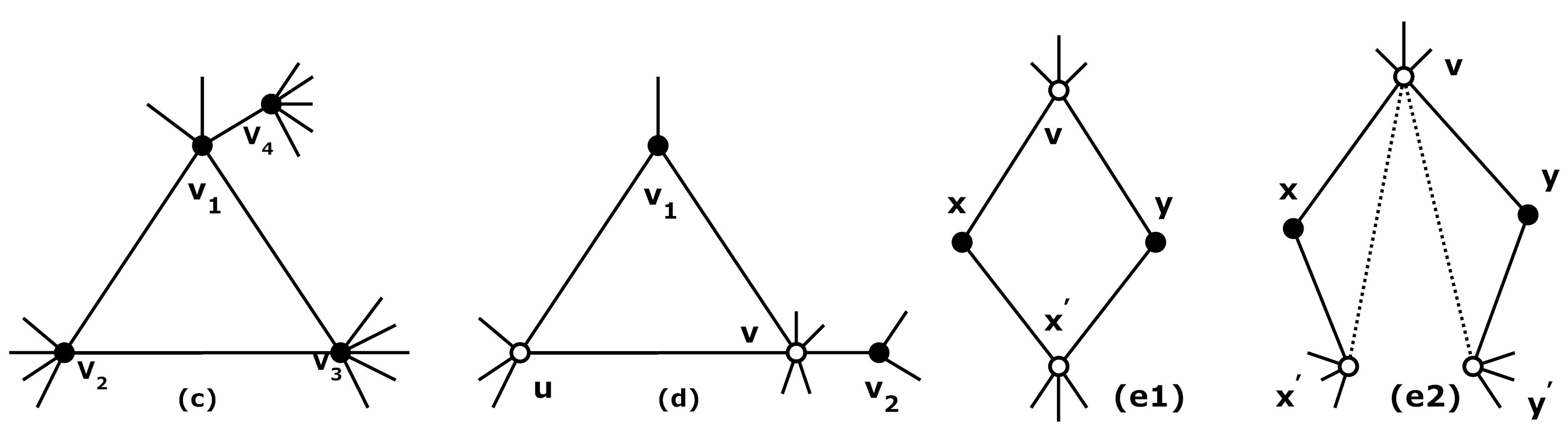}\\
  \caption{Reducible configurations of Lemma \ref{lem:6structure}.}\label{Fig:RC}
\end{center}
\end{figure}

\end{lem}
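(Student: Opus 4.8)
The plan is to prove each of the five structural properties (a)--(e) by the standard reducibility argument: assume the configuration occurs in the minimal counterexample $G$, delete a carefully chosen small set of edges and/or vertices to obtain a proper subgraph $H$, invoke minimality to get a partial $(2,1)$-total labelling $\Phi$ of $H$ with the color interval $C$ (so $|C|=M+3$), and then show the erased elements can be re-colored one at a time, contradicting the choice of $G$. Throughout, the two workhorses are the counting identities already used in Lemmas~\ref{lem:1structure}--\ref{lem:4structure}: for an edge $uv$, $|A_\Phi(uv)|\geq |C|-|F_\Phi(u)|-|F_\Phi(v)|$, and for a vertex $u$, $|A_\Phi(u)|\geq |C|-|\F|-|\E|\geq M+3-4d(u)$; the latter already gives that any $k$-vertex with $k\leq\M$ has an available color once its incident edges are colored. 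The master structure from Lemmas~\ref{lem:5structure} and the remark following it (each small vertex has a $j$-master, each $(M-i)$-vertex is the $j$-master of at most $j-1$ small vertices) is the bookkeeping device that lets the discharging in Section~3 succeed, but for proving (a)--(e) it is the recoloring counts that matter.

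For (a): if a $4$-vertex $v$ has a $7^-$-neighbor $u$, erase the colors of $v$ and of all four edges at $v$; since $d(v)=4\leq\M$, after recoloring the edges the vertex $v$ has $|A(v)|\geq M+3-16\geq1$ — wait, that is not automatic, so instead one erases only $v$ and the edge $uv$, recolors $uv$ using $|A_\Phi(uv)|\geq M+3-(3)-(d(u)+d(v)-2+3)=M+3-(d(u)+4+3)\geq1$ when $d(u)\leq M-8$, then recolors $v$ using $M+3-4\cdot4\geq1$ for $M\geq13$, with the boundary case $M=12$ handled by noting $\M=3$ forces $d(u)\geq 8$ anyway via Lemma~\ref{lem:2structure}. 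For (b), (c): these are triangle configurations; one erases the three triangle edges together with the low-degree vertex $v_1$, recolors the three edges (each endpoint pair has small total degree, so the $|A(e)|$ bound beats the number of edges to be placed — essentially an edge-coloring-of-a-path argument as in Lemma~\ref{lem:3structure}), and finally recolors $v_1$ since $d(v_1)=5\leq\M$. The extra hypothesis in (c) ("no other $6$-neighbor") is exactly what is needed to keep $|F_\Phi(v_1)|$ small enough during the edge-recoloring step. For (d): here $d(v_1)=d(v_2)=M+2-d(v)$ with $d(v_i)\in\{2,3\}$, so $d(v)\in\{M-1,M\}$; if some incident face is a triangle, that triangle shares two edges at $v$, and deleting $v_1$ (a $2$- or $3$-vertex, hence in some $X_k$) plus rerouting gives enough room — the $4^+$-face conclusion follows because a triangle would force two edges at $v_1$ (or $v_2$) whose endpoints are both nearly-$\Delta$, overloading the palette. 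For (e): if a $\Delta$-vertex $v$ has two $2$-neighbors $v_1,v_2$, delete $v_1,v_2$ and their four incident edges, recolor the four edges (their non-$v$ endpoints have degree $\geq M$ by Lemma~\ref{lem:2structure} but the edges at $v_1,v_2$ are few), then recolor $v_1,v_2$ as $2$-vertices.

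The main obstacle I expect is the edge-recoloring step in the triangle cases (b) and (c): when one erases the three edges of a triangle, the colors that must be avoided on each edge come not only from the two triangle endpoints but also from the third triangle edge once it is placed, so a naive per-edge count can fall just short, and one has to order the three edges cleverly (or appeal to $f$-choosability of the triangle as an $f$-edge-choosability instance, as Lemma~\ref{lem:3structure} does for bipartite graphs — but a triangle is not bipartite, so one needs the elementary fact that a triangle is edge-$f$-choosable when $f(e)\geq 2$ for all $e$ and at least one $f(e)\geq 3$, i.e. $\sum(f(e)-1)\geq |E|$ type conditions). A secondary nuisance is the boundary value $M=12$, where $\M=\lfloor 14/4\rfloor=3$ is as small as allowed, so several "$\leq\M$" appeals become tight; one checks these cases by hand, typically using Lemma~\ref{lem:2structure} to force neighbor degrees up to $M+2-d(\cdot)$ and thereby regain slack. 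Once (a)--(e) are established, together with Lemmas~\ref{lem:1structure}, \ref{lem:2structure}, \ref{lem:4structure} and the master count, the discharging argument of Section~3 will distribute charge $d(v)-6$ on vertices and $2d(f)-6$ on faces (Euler's formula, sum $=-12$) and derive the final contradiction.
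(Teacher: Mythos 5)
Your overall template (delete a few elements, invoke minimality, recolor greedily) is the right genre, but several of your specific deletion choices would fail, and the places where you wave your hands are exactly where the paper has to work hardest. The most serious problem is in (b) and (c): you propose erasing the three triangle edges \emph{and} the colour of the $5$-vertex $v_1$, planning to restore $v_1$ at the end "since $d(v_1)=5\leq\M$". But $\M=\lf\DF{M+2}{4}\rf=3$ for $M=12$, and the naive count $|A_\Phi(v_1)|\geq M+3-4\cdot5=M-17$ is negative throughout the relevant range, so the colour of $v_1$ cannot be recovered. The paper never uncolours $v_1$: it deletes only the two edges $v_1v_2,v_1v_3$, keeps $v_2v_3$ and $\Phi(v_1)$, and manufactures an overlap between $F_\Phi(v_1)$ and $F_\Phi(v_2)\cup F_\Phi(v_3)$ (recolouring $v_2v_3$ with $\Phi(v_1)$ if necessary) to gain the one unit of slack that makes the per-edge counts close. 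In (c) you have also misread the statement: "no other $6$-neighbour" is the \emph{conclusion}, so the reducible configuration includes a further $6$-neighbour $v_4$ of $v_1$, and the paper's proof hinges on recolouring the edge $v_1v_4$ to free the colour $\Phi(v_1v_4)$ for $v_1v_3$ --- an idea absent from your sketch.

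Part (a) has the same flavour of problem: colouring the edge $uv$ first and the $4$-vertex last gives $|A(v)|\geq M+3-16=M-13$, which is $\leq 0$ for $M=12,13$ (your "$\geq1$ for $M\geq13$" is off by one), and your fallback via Lemma~\ref{lem:2structure} does not apply because $4>\M$ there. The paper colours the $4$-vertex while only three of its edges are coloured ($M+3-4-9\geq2$) and then handles the interaction with the last edge by a two-candidate swap: if $A_\Phi(uv)$ happens to equal $\{\alpha-1,\alpha,\alpha+1\}$ for the first choice $\alpha$, switch to a second available vertex colour $\beta$. You do not address this interaction at all. In (e), deleting $x,y$ with their four edges and recolouring greedily fails when $x'\neq y'$: the lists have sizes $2,2,1,1$ on the path-pair $vx,vy,xx',yy'$ and a greedy order gets stuck at the last edge; the paper's Case~3 instead forms $H=G-\{x,y\}\cup\{vx',vy'\}$ and reuses the colours $\Phi(vx'),\Phi(vy')$, each of which is simultaneously legal at $v$ and at $x'$ (resp.\ $y'$), which is the key idea you are missing. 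Finally, your treatment of (d) is not an argument (the paper erases $v_1,v_2$ and the edges $vv_1,vv_2$, and in the hard case $A_\Phi(vv_1)=A_\Phi(vv_2)=\{\alpha\}$ exchanges the colours of $uv_1$ and $uv$), and your closing remark about discharging with charges $d(v)-6$ and $2d(f)-6$ does not match the paper's $d(x)-4$ scheme, though that is outside the present lemma.
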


\begin{proof} {\large (\textbf{\textit{a}})}\, Otherwise, suppose that there is $uv\in E$ such that $d(u)=4$ and $d(v)\leq7$. By the minimality of $G$, $H=G-uv$ is (2,1)-total labelable with color interval $C$. Erase the color of vertex $u$, and denote this partial (2,1)-total labelling by $\Phi$. Then $|A_\Phi(u)|\geq |C|-|\F|-|\E|\geq M+3-4-3\times3\geq15-13\geq2$ and $|A_\Phi(uv)|\geq|C|-|E_\Phi(u)|-|F_\Phi(v)|\geq15-3-(6+3)\geq3$. Choose $\alpha\in A_\Phi(u)$ to color $u$. If $A_\Phi(uv)\neq\{\alpha-1, \alpha, \alpha+1\}$, then we can choose $\gamma\in A_\Phi(uv)\backslash\{\alpha-1, \alpha,\alpha+1\}$ to color edge $uv$. Otherwise, $A_\Phi(uv)=\{\alpha-1,\alpha, \alpha+1\}$. Then we choose $\beta\in A_\Phi(u)\backslash \{\alpha\}$ to color $u$. Since $A_\Phi(uv)\neq\{\beta-1, \beta,\beta+1\}$, we can choose $\gamma'\in A_\Phi(uv)\backslash\{\beta-1,\beta, \beta+1\}$ to color edge $uv$. Thus, we extend $\Phi$ from subgraph $H$ to $G$, a contradiction.

{\large (\textbf{\textit{b}})}\, By Lemma \ref{lem:1structure}, it
is enough to prove that there is no [5, 6, 6]-face. Otherwise, let
$d(v_1)=5$, $d(v_2)=d(v_3)=6$ and let $H=G-\{v_1v_2, v_1v_3\}$. Then
$H$ has a (2,1)-total labelling $\Phi$ with interval $C$.

{\bf Case 1.}\ $\Phi(v_1)\in F_\Phi(v_2)\cup F_\Phi(v_3)$. Without
loss of generality, suppose that $\Phi(v_1)\in F_\Phi(v_2)$, \ie
$|F_\Phi(v_1)\cup F_\Phi(v_2)|\leq|F_\Phi(v_1)|+|F_\Phi(v_2)|-1$.
Then $|A_\Phi(v_1v_2)|=M+3-|F_\Phi(v_1)\cup F_\Phi(v_2)|\geq
15-(3+3+5+3-1)\geq2$ and $|A_\Phi(v_1v_3)|=M+3-|F_\Phi(v_1)\cup
F_\Phi(v_3)|\geq 15-(3+3+5+3)\geq1$ which implies that we can extend
$\Phi$ to $G$, a contradiction.

{\bf Case 2.}\ $\Phi(v_1)\notin F_\Phi(v_2)\cup F_\Phi(v_3)$. That
is, $\Phi(v_1)\in A_\Phi(v_2v_3)$. Recolor $v_2v_3$ with color
$\Phi(v_1)$ and denote this new partial (2,1)-total labelling by
$\Phi'$. Then $|F_{\Phi'}(v_1)\cup
F_{\Phi'}(v_2)|\leq|F_{\Phi'}(v_1)|+|F_{\Phi'}(v_2)|-1$. Analogous
to Case 1, we can extend $\Phi'$ to $G$, a contradiction.

{\large (\textbf{\textit{c}})}\, Suppose on the contrary that $G$
contains such a configuration (see Fig. \ref{Fig:RC} (c)). By the
minimality of $G$ $H=G-\{v_1v_2, v_1v_3\}$ has a (2,1)-total
labelling $\Phi$ with color interval $C$.

{\bf Claim 1.}\ $\Phi(v_1)\neq\Phi(v_2v_3)$. Otherwise,
$|F_\Phi(v_1)\cap F_\Phi(v_2)|\geq1$. Then
$|A_\Phi(v_1v_2)|=M+3-|F_\Phi(v_1)\cup F_\Phi(v_2)|\geq
15-(3+3+5+3-1)\geq2$ and $|A_\Phi(v_1v_3)|=M+3-|F_\Phi(v_1)\cup
F_\Phi(v_3)|\geq 15-(3+3+6+3-1)\geq1$ which implies that we can
extend the partial (2,1)-total labelling $\Phi$ to $G$, a
contradiction.

{\bf Claim 2.}\ $E_\Phi(v_1)\subseteq F_\Phi(v_2)\cup F_\Phi(v_3)$.
Otherwise, we can choose a color $\alpha\in E_\Phi(v_1)\backslash
\lk F_\Phi(v_2)\cup F_\Phi(v_3)\rk\neq \emptyset$ to recolor edge
$v_2v_3$. Denote this new coloring of $H$ by $\Phi'$. Then
$\alpha\in F_{\Phi'}(v_1)\cap F_{\Phi'}(v_2)\cap F_{\Phi'}(v_3)$.
Therefore, $|A_{\Phi'}(v_1v_2)|\geq2$ and $|A_{\Phi'}(v_1v_3)|\geq1$
which implies that we can extend $\Phi'$ to $G$, a contradiction.

{\bf Claim 3.}\ $E_\Phi(v_1)\subseteq F_\Phi(v_2)$. If not, we have
$E_\Phi(v_1)\cap F_\Phi(v_3)\neq\emptyset$ by Claim 2. Assume that
$E_\Phi(v_1)\subseteq F_\Phi(v_3)$. Then $|F_\Phi(v_1)\cap
F_\Phi(v_3)|\geq3$, which implies that
$|A_\Phi(v_1v_3)|=M+3-|F_\Phi(v_1)\cup F_\Phi(v_3)|\geq
15-(3+3+6+3-3)\geq3$ and $|A_\Phi(v_1v_2)|=M+3-|F_\Phi(v_1)\cup
F_\Phi(v_2)|\geq 15-(3+3+5+3)\geq1$. Therefore, we can extend $\Phi$
from subgraph $H$ to $G$, a contradiction.

By Claim 2 and Claim 3, we have $|F_\Phi(v_1)\cap
F_\Phi(v_2)|\geq|E_\Phi(v_1)\cap F_\Phi(v_2)|=3$ and
$E_\Phi(v_1)\cap F_\Phi(v_3)=\emptyset$. Since $\Phi(v_1v_4)\in
E_\Phi(v_1)$, we have $\Phi(v_1v_4)\notin F_\Phi(v_3)$. For edge
$v_1v_4$, $|A_\Phi(v_1v_4)|=M+3-|F_\Phi(v_1)\cup F_\Phi(v_4)|\geq
15-(3+3+5+3)\geq1$. Therefore, we choose $\alpha\in A_\Phi(v_1v_4)$
to recolor $v_1v_4$ and denote this new partial (2,1)-total
labelling by $\Phi'$. Obviously,
$F_{\Phi'}(v_1)=F_\Phi(v_1)\cup\{\alpha\}\backslash\{\Phi(v_1v_4)\}$,\
$F_{\Phi'}(v_2)=F_\Phi(v_2)$ and $F_{\Phi'}(v_3)=F_\Phi(v_3)$. Thus,
$\Phi(v_1v_4)\notin F_{\Phi'}(v_1)\cup F_{\Phi'}(v_3)$ which implies
that we can color $v_1v_3$ with $\Phi(v_1v_4)$. For edge $v_1v_2$,
we have $|A_{\Phi'}(v_1v_2)|=M+3-|F_{\Phi'}(v_1)\cup
F_{\Phi'}(v_2)|\geq 15-(3+3+5+3-2)\geq3$ because
$|F_{\Phi'}(v_1)\cap F_{\Phi'}(v_2)|\geq2$. Therefore, we choose
$\Phi(v_1v_4)$ and $\beta\in A_{\Phi'}(v_1v_2)\backslash
\{\Phi(v_1v_4)\}$ to color $v_1v_3$ and $v_1v_2$, respectively. Then
we obtain a (2,1)-total labelling of $G$, a contradiction.

{\large (\textbf{\textit{d}})}\, Assume that there is a triangle
face $f=uvv_1$ such that $vv_2\in E(G)$ and $2\leq
d(v_1)=d(v_2)=M+2-d(v)\leq3$ (see Fig. \ref{Fig:RC} (d)). By the
minimality of $G$, $H=G-\{vv_1, vv_2\}$ is (2,1)-total labelable
with color interval $C$. Erase the colors of $v_1$ and $v_2$, and
denote this partial (2,1)-total labelling by $\Phi$. Then
$|A_\Phi(vv_1)|\geq M+3-|E_\Phi(v_1)|-|F_\Phi(v)|\geq
M+3-(d(v_1)-1)-(d(v)-2+3)\geq M+3-(d(v_1)+d(v))\geq1$. Similarly,
$|A_\Phi(vv_2)|\geq1$.

{\bf Case 1.}\ $\max\set{|A_\Phi(vv_1)|,\, |A_\Phi(vv_2)|}\geq2 $ or
$A_\Phi(vv_1)\neq A_\Phi(vv_2)$. Then we can label $vv_1$ and $vv_2$
properly by choosing colors from $A_\Phi(vv_1)$ and $A_\Phi(vv_2)$,
respectively.

{\bf Case 2.}\ Otherwise, suppose that
$A_\Phi(vv_1)=A_\Phi(vv_2)=\set \alpha$. Then $\lk E_\Phi(v_1)\cup
E_\Phi(v_2)\rk\cap F_\Phi(v)=\emptyset$ and
$E_\Phi(v_1)=E_\Phi(v_2)$. Therefore we can exchange the colors of
$uv_1$ and $uv$. Denote this new partial (2,1)-total labelling by
$\Phi'$. It is not difficult to see that
$A_{\Phi'}(vv_1)=A_{\Phi}(vv_1)=\set \alpha$ and
$|A_{\Phi'}(vv_2)|\geq M+3-(|E_{\Phi'}(v_2)|+|F_{\Phi'}(v)|-1)\geq
M+3-(d(v_2)+d(v)-1)\geq2$, which is similar to Case 1.

Now consider the 2-vertices $v_1$ and $v_2$. $|A_{\Phi}(v_1)|\geq
M+3-4d(v_1)\geq M-9\geq3$ and $|A_{\Phi}(v_2)|\geq M+3-4d(v_2)\geq3$
which implies that we can obtain a (2,1)-total labelling of $G$, a
contradiction.

{\large (\textbf{\textit{e}})}\, Suppose that $v$ is a
$\Delta$-vertex adjacent to two 2-vertices $x$ and $y$. Let $x'$
(resp. $y'$) be the neighbor of $x$ (resp. $y$) different from $v$.

{\bf Case 1.}\ $x'= y'$, \ie $vxx'y$ forms a 4-cycle (see Fig.
\ref{Fig:RC} (e1)). By the minimality of $G$, $H=G-\{x, y\}$ has a
(2,1)-total labelling $\Phi$ with color interval $C$. Then
$|A_\Phi(vx)|\geq M+3-|F_\Phi(v)|\geq M+3-(\Delta-2+3)\geq2$.
Similarly, $|A_\Phi(vy)|\geq2$, $|A_\Phi(x'x)|\geq2$,
$|A_\Phi(x'y)|\geq2$. Since $\chi'_l(C_{4})=2$, we can choose colors
to label all the edges of 4-cycle $vxx'y$ properly. Denote this new
partial (2,1)-total labelling by $\Phi'$. Now consider the
2-vertices $x$ and $y$. Since $|A_{\Phi'}(x)|\geq M+3-4d(x)\geq
M-5\geq7$ and $|A_{\Phi'}(y)|\geq M+3-4d(y)\geq M-5\geq7$, we can
extend $\Phi'$ from $H$ to $G$, a contradiction.

{\bf Case 2.}\ $\set{vx', vy'}\cap E(G)\neq\emptyset$. This case is
impossible by Lemma \ref{lem:6structure} $(d)$.

{\bf Case 3.}\ $\set{vx', vy'}\cap E(G)=\emptyset$ (see Fig.
\ref{Fig:RC} (e2)). By the minimality of $G$,
$H=G-\set{x,y}\cup\set{vx', vy'}$ has a (2,1)-total labelling
$\Phi$, which implies that $\Phi(vx')\notin F_\Phi(x')\cup
F_\Phi(v)$ and $\Phi(vy')\notin F_\Phi(y')\cup F_\Phi(v)$. Color
$xx', vy$ with $\Phi(vx')$ and color $yy', vx$ with $\Phi(vy')$.
Then we obtain a partial (2,1)-total labelling $\Phi'$ of $G$. Since
$|A_{\Phi'}(x)|\geq M+3-4d(x)\geq M-5\geq7$ and $|A_{\Phi'}(y)|\geq
M+3-4d(y)\geq M-5\geq7$, we can choose colors to label $x$ and $y$
properly. Thus we extend $\Phi'$ to graph $G$, a contradiction.
\end{proof}

In the next section, we call a $[5, 6, 7]$-face be a \emph{special
3-face} and the other 3-face be a \emph{normal 3-face}. Lemma
\ref{lem:6structure} implies that each 5-vertex has at most two
special 3-faces incident with it.

\section{Discharging Part}
\begin{proof} [\noindent\textit{\textbf{Proof of Theorem
\ref{Thm:new2TL}.}}]

Let $G$ be a minimal counterexample in terms of $|V|+|E|$ with
$M\geq 12$. By the Lemmas of Section 2, $G$ has structural
properties in the following.
\begin{itemize}

\vspace{-3mm}\item[$(C1)$] $G$ is connected;

\vspace{-3mm}\item[$(C2)$] For each $uv\in E$, $d(u)+d(v)\geq M-1$;

\vspace{-3mm}\item[$(C3)$] If $uv\in E$ and
$\min\{d(u),d(v)\}\leq\M$, then $d(u)+d(v)\geq M+2$.

\vspace{-3mm}\item[$(C4)$] Each $i$-vertex (if exists) has one
$j$-master, where $2\leq i\leq j\leq3$;

\vspace{-3mm}\item[$(C5)$] Each $(M-i)$-vertex (if exists) can be a
$j$-master of at most $j-1$ vertices, where $2\leq i+2\leq j\leq3$.

\vspace{-3mm}\item[$(C6)$] $G$ satisfies $(a)$--$(e)$ of  Lemma
\ref{lem:6structure}.

\end{itemize}

We define the initial charge function $w(x):=d(x)-4$ for all element
$x\in V\cup F$. By Euler's formula $|V|-|E|+|F|=2$, we have
$\sum\limits_{x\in V\cup F}w(x)=\sum\limits_{v\in
V}(d(v)-4)+\sum\limits_{f\in F}(d(f)-4)=-8<0$. The discharging rules
are defined as follows.

\textit{$(R1)$}\  Each 2-vertex receives charge $\DF{1}{2}$ from
each of its incident $\Delta$-vertex and receives charge 1 from its
3-master.

\textit{$(R2)$}\  Each 3-vertex receives charge 1 from its
$3$-master.

\textit{$(R3)$}\  Each 5-vertex transfer charge $\frac{1}{4}$ to
each of its incident special 3-face and transfer $\DF{1}{6}$ to each
of its incident normal 3-face.

\textit{$(R4)$}\  Each $k$-vertex with $6\leq k\leq7$ transfer
charge $\DF{k-4}{k}$ to each 3-face that incident with it.

\textit{$(R5)$}\  Each $8^+$-vertex transfer charge $\DF{1}{2}$ to
each 3-face that incident with it.

Let $v$ be a $k$-vertex of $G$. If $k=2$, then $w'(v)=
w(v)+1+\DF{1}{2}\times2=-2+1+1=0$ by $(R1)$ and $(C3)$; If $k=3$,
then $w'(v)= w(v)+1=0$ since it receives 1 from its 3-master by
$(R2)$ and $(C4)$; If $k=4$, then $w'(v)= w(v)=0$ since we never
change the charge by our rules; If $k=5$, then $w'(v)\geq
w(v)-\frac{1}{4}\times2-\frac{1}{6}\times3=0$ by $(R3)$ and Lemma
\ref{lem:6structure} (c); If $6\leq k\leq7$, then $w'(v)\geq
w(v)-k\DF{k-4}{k}=0$ by $(R4)$; If $8\leq k\leq M-2$, then
$w'(v)\geq w(v)-k\DF{1}{2}\geq0$ by $(R5)$ and $(C3)$;

By Lemma \ref{lem:2structure}, it is not difficult to prove that
$\delta(G)\geq2$ when $\Delta=M$ and $\delta(G)\geq3$ otherwise. If
$M\geq\Delta+2$, then $M-2\geq\Delta$. Thus, $w(v)\geq0$ for all
$v\in V(G)$. Otherwise, $\Delta\leq M\leq \Delta+1$. Consider the
$k$-vertex $v$ with $M-1\leq d(v)=k\leq\Delta$.

{\bf Case 1.}\ $M=\Delta+1$. Then $\delta\geq3$ and $k=\Delta=M-1$.
Lemma \ref{lem:6structure} $(d)$ implies that $(M-1)$-vertex is
incident with at most $M-4$ triangle faces if it has at least two
3-neighbors. Thus, together with rules $(R2)$ and $(R5)$, we have
$w'(v)\geq \min\set
{w(v)-\DF{1}{2}\Delta-1,\,w(v)-\DF{1}{2}(\Delta-3)-2}=\DF{M-1}{2}-5\geq
\DF{1}{2}$.

{\bf Case 2.}\ $M=\Delta$. Then $\delta\geq2$. For $k=\Delta-1=M-1$,
see Case 1. For $k=\Delta=M$, $w'(v)\geq
\min\set{w(v)-\DF{1}{2}\Delta-1-\DF{1}{2},\,
w(v)-\DF{1}{2}(\Delta-3)-2-\DF{1}{2}}=\DF{M-11}{2}>0$ by Lemma
\ref{lem:6structure} $(d)$, $(e)$ and rules $(R1)$, $(R2)$, $(R5)$.

Let $f$ be a $k$-face of $G$. If $k\geq4$, then $w'(f)=w(f)\geq0$
since we never change the charge of them by our rules; If $k=3$,
assume that $f=[d(v_1),d(v_2),d(v_3)]$ with $d(v_1)\leq d(v_2)\leq
d(v_3)$. It is easy to see $w(f)=-1$. Consider the subcases as
follows.

$(1)$\ Suppose $d(v_1)\leq3$. Then $\min\set{d(v_2),d(v_3)}\geq
M+2-d(v_1)\geq M-1\geq11$ by $(C3)$. Thus,
$w'(f)=w(f)+\DF{1}{2}\times2=0$ by $(R5)$;

$(2)$\ Suppose $d(v_1)=4$. Then $d(v_3)\geq d(v_2)\geq8$ by Lemma
\ref{lem:6structure} (a). Therefore, $w'(f)=w(f)+\DF{1}{2}\times2=0$
by $(R5)$;

$(3)$\ Suppose $d(v_1)=5$. Then $d(v_2)=6, d(v_3)\geq7$ or
$d(v_3)\geq d(v_2)\geq7$ by Lemma \ref{lem:6structure} (b). If $f$
is a special 3-face, then $w'(f)\geq
w(f)+\DF{1}{4}+\DF{1}{3}+\DF{3}{7}=\DF{1}{84}>0$ by $(R2)$ and
$(R3)$. If $f$ is a normal 3-face, then $d(v_2)=6, d(v_3)\geq8$ or
$d(v_3)\geq d(v_2)\geq7$. Therefore, $w'(f)\geq
w(f)+\DF{1}{6}+\min\{\DF{1}{3}+\DF{1}{2}, \DF{3}{7}\times2\}\geq0$
by $(R3)-(R5)$.

$(4)$\ Suppose $d(v_1)=m\geq6$. Then $d(v_3)\geq d(v_2)\geq6$.
Therefore, $w'(f)\geq w(f)+3\times\min\{\DF{m-4}{m},\DF{1}{2}\}=0$
by $(R4)$ and $(R5)$.

Thus, we have $\sum\limits_{x\in V\cup F}w(x)=\sum\limits_{x\in
V\cup F}w'(x)>0$ since $w(v)>0$ if $d(v)=\Delta$ and this
contradiction completes our proof.
\end{proof}

Actually, we show that $\sum\limits_{x\in V\cup
F}w(x)=\sum\limits_{x\in V\cup F}w'(x)>0$ in our proof of Theorem
\ref{Thm:new2TL}. It implies the following corollary immediately.

\begin{cor}
Let $G$ be a graph embedded in a surface of nonnegative Euler
characteristic with maximum degree $\Delta\geq12$. Then
$\lambda_2^T(G)\leq\Delta+2$.
\end{cor}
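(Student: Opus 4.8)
To prove the Corollary, the plan is to notice that the argument for Theorem~\ref{Thm:new2TL} is essentially surface-independent: planarity enters only at the last step, where Euler's formula $|V|-|E|+|F|=2$ pins the initial total charge at $-8<0$. Every other ingredient (the reducibility and recolouring arguments of Section~2, the alternator and matching lemmas, the discharging rules $(R1)$--$(R5)$, and the local charge bookkeeping of Section~3) uses only that $G$ is a minimal counterexample carrying some embedding, so that ``face'' has meaning, never which surface it lies on. So the proof should transfer once $-8$ is replaced by $-4$ times the Euler characteristic.

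Concretely, I would argue as follows. Suppose the Corollary fails and let $G$ be a counterexample minimal in $|V|+|E|$, embedded in a surface $S$ of nonnegative Euler characteristic with $\Delta(G)\ge 12$. After a standard reduction (a connected graph embeddable in a surface of Euler characteristic at least $0$ has a $2$-cell embedding in the sphere, the projective plane, the torus, or the Klein bottle) I may assume the embedding is cellular, so that $|V|-|E|+|F|=\mathrm{eul}(S)\ge 0$ and $\sum_{f\in F}d(f)=2|E|$. First I would recheck Lemmas~\ref{lem:1structure}--\ref{lem:6structure}: each invokes only the minimality of $G$, Lemma~\ref{lem:3structure}, and a local recolouring around an edge, a triangle, or a small bipartite subgraph, and the face-based statements of Lemma~\ref{lem:6structure} make sense for any embedding, so $G$ still satisfies $(C1)$--$(C6)$. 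Then I would run the same discharging: with $w(x)=d(x)-4$ one has $\sum_{x\in V\cup F}w(x)=-4(|V|-|E|+|F|)=-4\,\mathrm{eul}(S)\le 0$, and applying $(R1)$--$(R5)$ the local computations of Section~3, which never mention $S$, give verbatim $w'(x)\ge 0$ for every vertex and every face, with $w'(v)>0$ for every $\Delta$-vertex $v$; indeed $w'(v)\ge(\Delta-8)/2$ when $8\le\Delta\le M-2$, and $w'(v)\ge(M-11)/2$ when $\Delta\in\{M-1,M\}$, both positive since $\Delta\ge12$. Since $G$ has a $\Delta$-vertex, $\sum_{x\in V\cup F}w'(x)>0$; but discharging preserves the sum, so $-4\,\mathrm{eul}(S)=\sum_{x}w'(x)>0$, \ie $\mathrm{eul}(S)<0$, contradicting the hypothesis.

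The bulk of this is verification: confirming the surface-independence of Sections~2 and~3, which I expect to be routine since nothing there exploits the global structure of the plane. Two points deserve explicit comment: (i) the reduction to a cellular embedding, which is what makes $|V|-|E|+|F|$ equal $\mathrm{eul}(S)$ and $\sum_{f}d(f)$ equal $2|E|$; and (ii) the fact that nonnegativity of the Euler characteristic is precisely the hypothesis making the initial total charge $\le 0$, since for $\mathrm{eul}(S)<0$ the total charge is positive and no contradiction survives, as it must. If I had to name the hard step it would be (i), though it is standard; everything else is bookkeeping already carried out in the proof of Theorem~\ref{Thm:new2TL}.
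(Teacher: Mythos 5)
Your proposal is correct and follows the same route as the paper: the authors simply observe that their discharging argument actually establishes the strict inequality $\sum_{x\in V\cup F}w'(x)>0$ (because every $\Delta$-vertex retains positive charge), which contradicts the initial total charge $-4\,(|V|-|E|+|F|)\le 0$ on any surface of nonnegative Euler characteristic. You spell out the surface-independence of Sections 2 and 3 and the cellular-embedding reduction more explicitly than the paper does, but the underlying argument is identical.
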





\end{document}